\theoremstyle{plain}
\newtheorem{theorem}{Theorem}[section]
\newtheorem{lemma}[theorem]{Lemma}
\newtheorem{proposition}[theorem]{Proposition}
\newtheorem{condition}[theorem]{Condition}
\theoremstyle{definition}
\newtheorem{definition}[theorem]{Definition}
\newtheorem{remark}[theorem]{Remark}
\numberwithin{equation}{section}
\let\c@equation\c@theorem  
\DeclareMathOperator{\End}{End}
\newcommand{\cA}{\mathcal{A}}
\newcommand{\C}{\mathcal{C}}
\newcommand{\B}{\mathcal{B}}
\newcommand{\D}{\mathcal{D}}
\newcommand{\E}{\mathcal{E}}
\newcommand{\Z}{\mathcal{Z}}
\newcommand{\M}{\mathcal{M}}
\newcommand{\N}{\mathcal{N}}
\newcommand{\dS}{\mathbb{S}}
\newcommand{\fp}{\mathfrak{p}}
\newcommand{\kk}{\Bbbk}
\newcommand{\id}{\textnormal{\textsf{Id}}}
\newcommand{\lv}{\hspace{.03cm}{}^{\vee} \hspace{-.03cm}}
\newcommand{\rv}{\hspace{.02cm}{}^{\vee} \hspace{-.05cm}}
\newcommand{\ev}{\textnormal{ev}}
\newcommand{\coev}{\textnormal{coev}}
\newcommand{\Hom}{{\sf Hom}}
\newcommand{\unit}{\mathds{1}}
\newcommand{\uNat}{\textnormal{\underline{Nat}}}
\newcommand{\Rex}{{\sf Rex}}
\newcommand{\ra}{{\sf ra}}
\newcommand{\la}{{\sf la}}
\newcommand{\tr}{\triangleright}
\newcommand{\rev}{\textnormal{rev}}
\newcommand{\op}{\textnormal{op}}
\newcommand{\mir}{\textnormal{mir}}
\definecolor{darkgreen}{RGB}{55,138,0}
\definecolor{burntorange}{RGB}{180,85,0}
\definecolor{navyblue}{RGB}{18,40,180}
\definecolor{cyan(process)}{rgb}{0.0, 0.6, 1.0}
\begin{document}
\allowdisplaybreaks

\begin{abstract}
    Let $U:\mathcal{C}\rightarrow\mathcal{D}$ be a strong monoidal functor between abelian monoidal categories admitting a right adjoint $R$, such that $R$ is exact, faithful and the adjunction $U\dashv R$ is coHopf. Building on the work of Balan \cite{balan2017hopf}, we show that $R$ is separable (resp., special) Frobenius monoidal if and only if $R(\mathds{1}_{\mathcal{D}})$ is a separable (resp., special) Frobenius algebra in $\mathcal{C}$. If further, $\mathcal{C},\mathcal{D}$ are pivotal (resp., ribbon) categories and $U$ is a pivotal (resp., braided pivotal) functor, then $R$ is a pivotal (resp., ribbon) functor if and only if $R(\mathds{1}_{\mathcal{D}})$ is a symmetric Frobenius algebra in $\mathcal{C}$. As an application, we construct Frobenius monoidal functors going into the Drinfeld center $\mathcal{Z}(\mathcal{C})$, thereby producing Frobenius algebras in it.

\end{abstract}


\maketitle

\setcounter{tocdepth}{1}


\section{Introduction}
Frobenius algebras are important mathematical objects with connections to many different areas of research, including Topological Quantum Field Theories (TQFTs) \cite{kock2004frobenius,carqueville2018line}, Conformal Field Theories (CFTs) \cite{fuchs2002tft} and Computer Science \cite{coecke2011interacting}.
One way of producing new Frobenius algebras in a monoidal category $\C$ from known Frobenius algebras in another such category $\D$ is via Frobenius monoidal functors $F:\D\rightarrow\C$ \cite{day2008note}. Thus, our goal is to construct Frobenius monoidal functors, especially with properties like being braided, ribbon, pivotal, separable, special,  etc., that in turn yield vital properties for Frobenius algebras used in applications.
To accomplish our goal, we generalize results from \cite{balan2017hopf} which established necessary conditions under which the right adjoint in a coHopf adjunction $U\dashv R$ is a Frobenius monoidal functor. 
Our main result is the following.

\begin{theorem}\label{thm:intro1}
Let $U:\C\rightarrow\D$ be a strong monoidal functor between abelian monoidal categories admitting a right adjoint $R$, such that $R$ is exact, faithful and the adjunction $U\dashv R$ is coHopf. Then, we get that $R$ is a $\circledast$ monoidal functor if and only if $R(\unit_{\D})$ is a $\divideontimes$ algebra in $\C$, as summarized in the table below.\vspace*{0.1cm}
\begin{center}
{\small
\begin{tabular}{ |c|c|c|c|c|c| }
\hline
\multirow{2}{*}{Reference} & \multicolumn{3}{c|}{Input} & $R(\unit_{\D})$ & $R$   \\
\cline{2-6}
& $\C$ & $\D$ & $U$ & $\divideontimes$ & $\circledast$ \\
\hline 
Thm.~\ref{thm:speFrobcoHopf} & $\otimes$ & $\otimes$ & strong $\otimes$ & separable Frob. & separable Frob. \\
Thm.~\ref{thm:speFrobcoHopf} & $\otimes$ & $\otimes$ & strong $\otimes$ & special Frob. & special Frob. \\
Thm.~\ref{thm:pivotalcoHopf} & pivotal & pivotal & pivotal, strong $\otimes$ &  symmetric Frob. & pivotal Frob. \\
Thm.~\ref{thm:ribboncoHopf} & ribbon & ribbon  & ribbon, strong $\otimes$ & symmetric Frob. & ribbon Frob. \\
\hline
\end{tabular}
}
\end{center}
\end{theorem}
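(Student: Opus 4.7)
By the coHopf hypothesis and the work of Balan \cite{balan2017hopf}, the functor $R$ is already a Frobenius monoidal functor, and so $A := R(\unit_{\mathcal{D}})$ automatically carries the structure of a Frobenius algebra in $\mathcal{C}$. Our task is therefore to refine this structure under the additional hypotheses of each row of the table. The forward implication in all four rows is immediate: specializing the defining identities of a separable / special / pivotal / ribbon Frobenius monoidal functor at $X = Y = \unit_{\mathcal{D}}$ converts the property of $R$ into the corresponding property (separable / special / symmetric / symmetric) of $A$.

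\textbf{Converse for the separable and special cases.} For an arbitrary $X \in \mathcal{D}$, the object $R(X)$ is an $A$-bimodule via the lax monoidal structure of $R$. The essential observation is that, under the coHopf hypothesis, the coHopf isomorphism identifies $R(X \otimes Y)$ with an object built out of $R(X)$, $R(Y)$ and $A$ in such a way that the composition $\mu^R_{X,Y} \circ \Delta^R_{X,Y}$ is expressible, up to naturality, in terms of $\mu^A \circ \Delta^A$ acting through the bimodule action. Thus separability of $A$ propagates to separability of $R$ object-by-object. The special condition $\epsilon \circ \eta = \lambda \, \id_{\unit}$ is handled analogously at the level of the unit / counit data of the Frobenius monoidal structure, and exactness together with faithfulness of $R$ guarantees that the required identities really hold globally rather than merely after applying $R$ to a generator.

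\textbf{Converse for the pivotal and ribbon cases.} When $U$ is pivotal, the pivotal structure on $U$ and the coHopf isomorphism together yield a canonical natural isomorphism $R(X)^{\vee} \cong R(X^{\vee})$ compatible with the duals. The statement that $R$ is a \emph{pivotal} Frobenius monoidal functor amounts to the compatibility of this isomorphism with the pivotal structures on $\mathcal{C}$ and $\mathcal{D}$; by naturality this compatibility can be reduced to a single identity on $A$, which is precisely the assertion that $A$ is a symmetric Frobenius algebra. The ribbon case proceeds in the same spirit, with the additional input that $U$ being braided pivotal makes the coHopf isomorphism braided, so that symmetry of $A$ automatically yields compatibility with the twists on both sides.

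\textbf{Main obstacle.} The principal difficulty lies in the converse direction for the separable/special cases, namely making precise the sense in which the global Frobenius monoidal structure of $R$ is controlled by the Frobenius algebra structure of $A = R(\unit_{\mathcal{D}})$. This requires a careful diagrammatic analysis that interweaves the coHopf isomorphism, the unit / counit of $U \dashv R$, and the lax / oplax structure maps of $R$. A secondary challenge in the pivotal and ribbon rows is identifying the pivotal structure transported to $R(X)^{\vee}$ with that on $R(X^{\vee})$ in a way manifestly compatible with the Frobenius structure; here the pivotality (resp., braided pivotality) of $U$ is indispensable, and symmetry of $A$ is precisely the obstruction that must vanish.
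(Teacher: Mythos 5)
Your proposal has the right architecture (the easy direction by evaluating at $\unit_{\D}$, the hard direction by exploiting the coHopf operators), but it contains one outright error and leaves all of the genuinely hard steps as unexamined heuristics. The error is the opening claim that ``by the coHopf hypothesis and the work of Balan, $R$ is already a Frobenius monoidal functor, and so $A=R(\unit_{\D})$ automatically carries a Frobenius algebra structure.'' This is false: under Condition~\ref{cond:main} alone, $R$ is only lax monoidal, so $A$ is only an algebra. Balan's result (Theorem~\ref{thm:frobcoHopf} here) is an \emph{equivalence}: $R$ is Frobenius monoidal iff $A$ is a Frobenius algebra, and even establishing that requires the premonadicity of $R^{\op}\dashv U^{\op}$, which is where faithfulness of $R$ enters (Lemma~\ref{lem:premonadic}). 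Your plan never uses faithfulness for this purpose and instead invokes it vaguely to make identities ``hold globally,'' which is not the actual mechanism: faithfulness gives that $\varepsilon^r_Y$ is epic, and epimorphisms are cancelled at two decisive points of the argument.

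The converse directions are where the content lies, and your sketches would not close. For separability, the proof does not go through an $A$-bimodule structure on $R(X)$; it establishes the identity $R_2(X,Y)\circ R^2(X,Y)=R(\id_X\otimes\varepsilon^r_Y\,\eta^l_Y)$, where $\eta^l$ is the unit of the \emph{other} adjunction $R\dashv U$ produced by the Frobenius structure, and then shows $\varepsilon^r_Y\,\eta^l_Y=\id_Y$ by a computation that feeds the explicit formula for $\eta^l$ (Theorem~\ref{thm:frobcoHopf}(b)) through the coHopf-operator identities of Lemma~\ref{lem:operatorCoHopf} and uses $m\Delta=\id$ at exactly one step, finishing by cancelling the epic $\varepsilon^r_Y$. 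Nothing in your ``bimodule action'' heuristic identifies this composite or explains why separability of $A$ alone controls it. For the pivotal row, ``by naturality this reduces to a single identity on $A$'' hides the real work: one needs the half-braiding $\beta=(h^l_{\unit,X})^{-1}h^r_{X,\unit}$ on $A$, the fact that $A\otimes-$ is then a pivotal Frobenius functor when $A$ is symmetric (Proposition~\ref{prop:piv}(b), a nontrivial computation with $\zeta$ and $\xi$ reducing to Lemma~\ref{lem:symFrobalternate}), the monoidal isomorphism $A\otimes-\cong RU$ furnished by $h^l$, and finally a descent from $RU$ to $R$ using pivotality of $U$ and epicness of $R(\varepsilon^r_X)$. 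For the ribbon row, the route is not that ``the coHopf isomorphism is braided'': one uses that the Frobenius structure makes $R$ simultaneously the left and right adjoint of the braided strong monoidal $U$, hence both braided and cobraided (Lemma~\ref{lem:adjointBraided}), and then invokes the characterization that a braided Frobenius functor between ribbon categories is ribbon iff it is pivotal and cobraided (Proposition~\ref{prop:brPivotalFunctor}). Each of these missing ingredients is essential, not routine.
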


For applications to TQFTs and CFTs, one needs to construct Frobenius algebras in Modular Tensor Categories (MTCs). In particular, Frobenius algebras in MTCs have been used to construct new MTCs via the category of local modules \cite{pareigis1995braiding,schauenburg2001monoidal,kirillov2002q,laugwitz2022constructing}. The prototypical example of a MTC is the Drinfeld center $\Z(\C)$ of a spherical finite tensor category $\C$ \cite{muger2003subfactors,shimizu2017ribbon}. 
Thus, as an application, we apply Theorem~\ref{thm:intro1} to a canonical functor $\Psi$ from $\Z(\C)$ to a certain category of endofunctors of a module category $\M$ over $\C$. This yields a Frobenius monoidal functor $\Psi^{\ra}$ (the right adjoint of $\Psi$) going into $\Z(\C)$, thereby producing Frobenius algebras in $\Z(\C)$.

\begin{theorem}\label{thm:intro2}
    Let $\C$ be a finite tensor category and $\M$ be an indecomposable, exact left $\C$-module category. Then the following statements hold.
    \begin{enumerate}
        \item[\upshape{(i)}] $\Psi^{\ra}$ is a (resp., separable, special) Frobenius monoidal functor if and only if $\Psi^{\ra}(\id_{\M})$ is (resp., separable, special) Frobenius algebra in $\Z(\C)$. 
        \item[\upshape{(ii)}] If $\C$ is pivotal and $\M$ a pivotal left $\C$-module category, then $\Psi^{\ra}$ is pivotal if and only if $\Psi^{\ra}(\id_{\M})$ is a symmetric Frobenius algebra in $\Z(\C)$. 
    \end{enumerate}
\end{theorem}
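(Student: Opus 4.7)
The plan is to apply Theorem~\ref{thm:intro1} to the strong monoidal functor $\Psi\colon \Z(\C)\to \End_\C(\M)$, where $\End_\C(\M)$ denotes the monoidal category of $\C$-module endofunctors of $\M$, whose unit object is $\id_\M$. Concretely, $\Psi$ sends a half-braided object $(V,\gamma)$ to the endofunctor $V\otimes(-)$ equipped with the $\C$-module structure induced by $\gamma$. Under this identification the relevant object in Theorem~\ref{thm:intro1} becomes $\Psi^{\ra}(\id_\M)$, matching the statement.

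Before invoking Theorem~\ref{thm:intro1}, its four hypotheses must be verified: that the source and target are abelian monoidal, that $\Psi^{\ra}$ exists and is exact and faithful, and that the adjunction $\Psi\dashv\Psi^{\ra}$ is coHopf. The first point is standard, since $\Z(\C)$ and $\End_\C(\M)$ are both finite tensor categories under our hypotheses on $\C$ and $\M$. Existence of $\Psi^{\ra}$ follows from the usual finite tensor category machinery, and exactness holds because $\Psi^{\ra}$ admits a further right adjoint in this setting. Faithfulness is a consequence of the indecomposability of $\M$, which forces the counit $\Psi\Psi^{\ra}\to \id$ to be an epimorphism.

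The main obstacle I foresee is verifying the coHopf condition, namely that the canonical comparison morphism between $\Psi(X)\otimes \Psi^{\ra}(F)$ and $\Psi^{\ra}(\Psi(X)\otimes F)$, built from the adjunction unit and the lax monoidal structure of $\Psi^{\ra}$, is an isomorphism for all $X\in \Z(\C)$ and $F\in \End_\C(\M)$. I would establish this by evaluating both sides on an arbitrary $m\in\M$: the action of $\Psi(X)$ on $\M$ is given explicitly by tensoring with the underlying object of $X$, and the half-braiding $\gamma_X$ supplies exactly the natural transformation needed to commute $\Psi^{\ra}$ past this tensoring. Exactness of $\M$ then ensures that the resulting map is invertible on the internal-Hom description of $\Psi^{\ra}$.

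Once all hypotheses are confirmed, part (i) follows by a direct application of Theorem~\ref{thm:speFrobcoHopf}. For part (ii), I additionally need to verify that a pivotal structure on $\M$ as a $\C$-module category induces a pivotal structure on $\End_\C(\M)$ (via the relative Serre functor being suitably trivialized) and that $\Psi$ is pivotal with respect to these structures. Assuming this check goes through, Theorem~\ref{thm:pivotalcoHopf} immediately yields the equivalence between $\Psi^{\ra}$ being a pivotal functor and $\Psi^{\ra}(\id_\M)$ being a symmetric Frobenius algebra in $\Z(\C)$.
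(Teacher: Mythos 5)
Your overall strategy is the same as the paper's: verify Condition~\ref{cond:main} for the adjunction $\Psi\dashv\Psi^{\ra}$ and then invoke Theorems~\ref{thm:frobcoHopf}, \ref{thm:speFrobcoHopf} and \ref{thm:pivotalcoHopf}. But the execution has a genuine gap at the one hypothesis that actually requires work, namely faithfulness of $\Psi^{\ra}$. You assert that indecomposability of $\M$ ``forces the counit $\Psi\Psi^{\ra}\to\id$ to be an epimorphism,'' but give no argument; this is precisely the nontrivial content, and it is where the indecomposability hypothesis enters. The paper's route is: indecomposability gives $\End_{\D}(\id_{\M})\cong\kk$ for $\D=\Rex_{\C}(\M)$; a result of Shimizu then yields faithfulness of the right adjoint of the forgetful functor out of the Drinfeld center of $\D^{\rev}$; and the factorization $\Psi = U'_{\D}\circ\Omega_{\D}\circ\Theta_{\M}$ of Lemma~\ref{lem:PsiStructure}, whose other two factors are equivalences, transports this to $\Psi^{\ra}$. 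Without this (or a substitute), both parts of your proof are incomplete. The same factorization is also what the paper uses to settle the point you defer in part (ii) -- pivotality of $\Psi$ (Lemma~\ref{lem:psiPivotal}) -- so ``assuming this check goes through'' is leaving out a second required ingredient.

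Conversely, you spend your effort on the wrong hypothesis: the coHopf condition, which you call the main obstacle, is automatic here. Both $\Z(\C)$ and $\Rex_{\C}(\M)$ are rigid, and by Theorem~\ref{thm:docadj}(b) any monoidal adjunction between rigid categories is coHopf; no evaluation on objects of $\M$ or internal-Hom analysis is needed, and your sketch of that computation is in any case too vague to verify. Two smaller points: the target of $\Psi$ is $\Rex_{\C}(\M)$, the \emph{right exact} $\C$-module endofunctors, not all of $\End_{\C}(\M)$ (this matters for the target to be a finite tensor category, hence rigid); and exactness of $\Psi^{\ra}$ should be justified as in the paper, via $\Psi^{\ra}$ admitting both adjoints between finite tensor categories, which is essentially what you say.
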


The question of when the algebra $\Psi^{\ra}(\id_{\M})$ is (resp., symmetric, special, separable) Frobenius is addressed in the work \cite{yadav2023unimodular}.

\begin{remark}
The algebra $\Psi^{\ra}(\id_{\M})$ is isomorphic to the algebras $\uNat(\id_{\M},\id_{\M})$ appearing in \cite{fuchs2021internal}. In future works, we hope to clarify this connection further.
\end{remark}

This paper is organized as follows. We review background material on monoidal categories in Section~\ref{sec:background}. In Sections~\ref{sec:coHopf} and \ref{sec:application}, Theorem~\ref{thm:intro1} and Theorem~\ref{thm:intro2} are proved, respectively.


\subsection*{Acknowledgements}
The author would like to thank Chelsea Walton for her guidance and support throughout the project. The author would also like to thank the anonymous referee for providing detailed comments that significantly improved the exposition of our manuscript. The author is partially supported by Nettie S.~Autrie Research Fellowship from Rice University.


\section{Background on monoidal categories}\label{sec:background}
In this work, $\kk$ will denote an algebraically closed field. We discuss monoidal categories and functors in Section~\ref{subsec:moncat}, pivotal categories and functors in Section~\ref{subsec:duality}, braided categories in Section~\ref{subsec:braided}, algebras in monoidal categories in Section~\ref{subsec:algebras}, adjunctions in Section~\ref{subsec:adjunctions}, and Drinfeld centers in Section~\ref{subsec:drinfeld}. 
For further details, see \cite{mac2013categories,etingof2016tensor,turaev2017monoidal}.


\subsection{Monoidal categories}\label{subsec:moncat}
A \textit{monoidal category} consists of a category $\C$ equipped with a functor $\otimes: \C\times\C\rightarrow \C$ (called the \textit{tensor product}), an object $\unit\in \C$ (called the \textit{unit object}) and natural isomorphisms
$X\otimes(Y \otimes Z) \cong (X\otimes Y)\otimes Z, \;  X\otimes \unit \cong X \cong \unit \otimes X$, for all $X,Y,Z\in \C$, 
satisfying the pentagon axiom and the triangle axiom. By Mac Lane's coherence theorem \cite[VII.2]{mac2013categories}, we can (and will) assume that the natural isomorphisms above are identities.

In the following, we will use the notation $(-)^{\op}$ to denote opposite of a category, functor or natural transformations. Also, let $\C^{\rev}$ denote the category $\C$ with the opposite tensor product $\otimes^{\rev}$, that is, $X\otimes^{\rev}Y :=Y\otimes X$. Then both $(\C^{\op},\otimes,\unit)$ and $(\C^{\rev},\otimes^{\rev},\unit)$ are monoidal categories.


\subsubsection*{Monoidal functors} Let $(\C,\otimes_{\C},\unit_{\C})$ and $(\D,\otimes_{\D},\unit_{\D})$ be two monoidal categories. 
\smallskip

A \textit{monoidal functor} from $\C$ to $\D$ is a tuple $(F,F_2,F_0)$ consisting of a functor $F:\C\rightarrow \D$, a natural transformation $F_2 = \{F_2(X,Y):F(X)\otimes_{\D} F(Y) \rightarrow F(X\otimes_{\C} Y)  \}_{X,Y\in \C}$ and a morphism $F_0:\unit_{\D} \rightarrow F(\unit_{\C}) $ in $\D$ such that the following conditions are satisfied for all $X,Y,Z \in \C$:
\begin{equation*}
\begin{split}
    F_2(X,Y\otimes_\C Z)\;(\id_{F(X)} \otimes_{\mathcal{D}} F_2(Y,Z))\; = \; F_2(X \otimes_\C Y,Z)(F_2(X,Y)\otimes_{\mathcal{D}}\id_{F(Z)}),\\
    \smallskip
    F_2(\unit_{\C},X) \; (F_0 \otimes_{\mathcal{D}} \id_{F(X)}) \; = \; \id_{F(X)},\hspace{0.3cm}  \hspace{0.3cm}
    F_2(X,\unit_\C) \;  (\id_{F(X)} \otimes_{\mathcal{D}} F_0) \;  = \; \id_{F(X)}. 
\end{split}
\end{equation*}
A monoidal functor $(F,F_2,F_0)$ is called \textit{strong} if $F_2$ and $F_0$ are isomorphisms in $\D$. If $F$ is strong monoidal and an equivalence, we call it a \textit{monoidal equivalence}.
\smallskip

A \textit{comonoidal functor} from $\C$ to $\D$ is a tuple $(F,F^2,F^0)$ consisting of a functor $F:\C\rightarrow \D$, a natural transformation $F^2 = \{F^2(X,Y): F(X\otimes_{\C} Y)  \rightarrow F(X)\otimes_{\D} F(Y) \}_{X,Y\in \C}$ and a morphism $F_0: F(\unit_{\C}) \rightarrow \unit_{\D} $ such that $(F^{\op},(F^2)^{\op},(F^0)^{\op}): \C^{\op}\rightarrow \D^{\op}$ is a monoidal functor.
\smallskip

A \textit{Frobenius monoidal functor} \cite[Definition~1]{day2008note} is defined as a tuple $(F,F_0,F_2,F^2,F^0)$ where $(F,F_2,F_0):\C\rightarrow \D$ is a monoidal functor, $(F,F^2,F^0):\C\rightarrow \D$ is a comonoidal functor and for all $X,Y,Z \in \C$, the following holds:
\[
\begin{array}{c}
\smallskip
(\id_{F(X)}\otimes_{\D} F_2(Y,Z) )\;(F^2(X,Y)\otimes_{\D} \id_{F(Z)})  \; =\; F^2(X,Y\otimes_{\C} Z) F_2(X\otimes_{\C} Y,Z), \\
(F_2(X,Y)\otimes_{\D} \id_{F(Z)})\; ( \id_{F(X)}\otimes_{\D} F^2(Y,Z)) \; = \;F^2(X\otimes_{\C} Y, Z) F_2(X, Y \otimes_{\C} Z). 
\end{array}
\]
It is clear that, $F$ is Frobenius monoidal if and only if $F^{\op}$ is. Now, suppose that $\D$ is a $\kk$-linear monoidal category. Then, we call a Frobenius monoidal functor $F:\C\rightarrow\D$ \textit{separable} if it satisfies $F_2(X,Y)\circ F^2(X,Y) = \beta_{2} \; \id_{F(X\otimes Y)}$ for some $\beta_2\in \kk^{\times}$. 
If in addition, $F^0\circ F_0=\beta_0\; \id_{\unit}$ holds for some $\beta_0\in \kk^{\times}$, we call it \textit{special}.


\subsubsection*{Monoidal natural transformation}
Let $F,G: \C\rightarrow\D$ be monoidal functors. A \textit{monoidal natural transformation} $\alpha= \{\alpha_X:F(X)\rightarrow G(X) \}_{X\in \C}$ is a natural transformation such that the following conditions are satisfied
\begin{equation*}
    G_2(X,Y)\circ (\alpha_X\otimes_{\D} \alpha_{Y}) = \alpha_{X\otimes_{\C}Y}  \circ F_2(X,Y), \hspace{1cm} \alpha_{\unit}\circ F_0 = G_0.
\end{equation*} 

Let $F,G: \C\rightarrow\D$ be comonoidal functors. A \textit{comonoidal natural transformation} $\alpha:F \rightarrow G$ is a natural transformation $\alpha$ such that $\alpha^{\op}$ (defined as $\alpha^{\op}_X=(\alpha_X)^{\op}$) is a monoidal natural transformation.


\subsection{Duality in monoidal categories}\label{subsec:duality}
We call a monoidal category \textit{rigid} if every object $X\in \C$ comes equipped with a left dual and a right dual, i.e., there exists an object $\lv X$ (\textit{left dual}) along with co/evaluation maps $\ev_X:\lv X\otimes X\rightarrow \unit$, $\coev_X:\unit\rightarrow X\otimes \lv X$ and an object $X\rv$ (\textit{right dual}) with co/evaluation maps $\widetilde{\ev}_X:X\otimes X\rv\rightarrow \unit$, $\widetilde{\coev}_X: \unit\rightarrow X\rv \otimes X$ satisfying the usual snake relations.

The maps $X\mapsto \lv X$ and $X \mapsto X\rv$ extend to monoidal equivalences from $\C^{\rev}$ to $\C^{\op}$. 
We can (and will) replace $\C$ by an equivalent monoidal category and choose duals in a suitable way to ensure that $\lv(-)$ and $(-)\rv$ are strict monoidal and mutually inverse to each other (see \cite[Lemma~5.4]{shimizu2015pivotal}). 

\subsubsection*{Frobenius monoidal functors and duality}
Suppose that we have a Frobenius monoidal functor $(F,F_2,F_0,F^2,F^0)$ between rigid categories $\C,\; \D$. Then by \cite[Theorem~2]{day2008note}, 
\begin{equation*}
    (F(\lv X),\;\; \overline{\ev}_{F(X)}= F^0\circ F(\ev_X) \circ F_2(\lv X,X),\;\; \overline{\coev}_{F(X)} = F^2(X,\lv X) \circ F(\coev_X) \circ F_0  )
\end{equation*} 
\noindent
\hspace{-0.17cm}is a left dual of $F(X)$ for $X\in\C$.
Thus, by uniqueness of dual objects, we have a unique family of natural isomorphisms $\zeta^F_X: F(\lv X) \rightarrow \lv F(X)$ called the \textit{duality transformation} of $F$. Explicitly, $\zeta^F_X$ and its inverse are given by
\begin{equation}\label{eq:zeta}
\begin{split}
    \zeta^F_X & = (\overline{\ev}_{F(X)}\otimes \id_{\lv F(X)}) \circ
    (\id_{F(\lv X)}\otimes \coev_{F(X)}) , \\
    \smallskip
    (\zeta_X^F)^{-1} & =  (\ev_{F(X)} \otimes F(\lv X) ) \circ
    ( \id_{\lv F(X)}\otimes \overline{\coev}_{F(X)} ).
\end{split}
\end{equation}
Also, define the natural isomorphism 
\begin{equation}\label{eq:xi}
    \xi^F_X:= \lv((\zeta^F_X)^{-1}) \circ \zeta^F_{\lv X} : F(\lv\lv X) \rightarrow \lv\lv F(X). 
\end{equation}

\begin{remark}
The above discussion follows \cite[Section~3.1]{shimizu2015pivotal}, \cite[Section~1]{ng2007higher} where these results are stated for $F$ being a strong monoidal functor. However, similar arguments apply in the case when $F$ is Frobenius monoidal.
\end{remark}

\begin{lemma}\label{lem:pivShimizuA}
Let $\C\xrightarrow{F}\D\xrightarrow{G}\E$ be a sequence of Frobenius monoidal functors between rigid monoidal categories. Then, we have that 
\begin{equation}\label{eq:zetaXiComp}
\zeta^{GF}_X = \zeta^G_{F(X)}\circ G(\zeta^F_X) \;\;\;\; \text{and} \;\;\;\; \xi^{GF}_X = \xi^G_{F(X)}\circ G(\xi^F_X).
\end{equation}
\end{lemma}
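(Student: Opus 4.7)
The plan is to prove the $\zeta$-identity first by exploiting the universal property of left duals, and then derive the $\xi$-identity as a formal consequence using naturality of $\zeta^G$.

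For the first identity, I will use that in a rigid monoidal category a morphism $\phi : A \to \lv B$ is uniquely determined by its mate $\ev_B \circ (\phi \otimes \id_B) : A \otimes B \to \unit$. So it suffices to verify that $\zeta^{GF}_X$ and $\zeta^G_{F(X)} \circ G(\zeta^F_X)$ have the same mate, i.e.\ that both equal $\overline{\ev}_{GF(X)} = (GF)^0 \circ GF(\ev_X) \circ (GF)_2(\lv X, X)$ after composing with $\ev_{GF(X)}$. For $\zeta^{GF}_X$ this is immediate from \eqref{eq:zeta}. For $\zeta^G_{F(X)} \circ G(\zeta^F_X)$, I apply \eqref{eq:zeta} to $\zeta^G_{F(X)}$ and then use naturality of $G_2$ to slide $G(\zeta^F_X)$ into $G$, producing $G^0 \circ G(\overline{\ev}_{F(X)}) \circ G_2(F\lv X, FX)$. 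Expanding $\overline{\ev}_{F(X)} = F^0 \circ F(\ev_X) \circ F_2(\lv X, X)$ and invoking the explicit Frobenius structure on the composite, namely $(GF)_2 = G(F_2) \circ G_2$ and $(GF)^0 = G^0 \circ G(F^0)$, the expression collapses to $\overline{\ev}_{GF(X)}$, as required.

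For the $\xi$ identity, I start from the definition $\xi^F_X = \lv((\zeta^F_X)^{-1}) \circ \zeta^F_{\lv X}$ applied to $GF$. Substituting the composition formula for $\zeta$ just established into both $\zeta^{GF}_{\lv X}$ and $(\zeta^{GF}_X)^{-1}$ and taking $\lv(-)$, the two sides of $\xi^{GF}_X = \xi^G_{F(X)} \circ G(\xi^F_X)$ share the common outer factor $\lv((\zeta^G_{F(X)})^{-1})$, and after cancellation the desired equality reduces to
\[
\lv G((\zeta^F_X)^{-1}) \circ \zeta^G_{F(\lv X)} \;=\; \zeta^G_{\lv F(X)} \circ G(\lv((\zeta^F_X)^{-1})).
\]
This is precisely the naturality square of $\zeta^G$, viewed as a natural transformation between the functors $G\lv(-)$ and $\lv G(-) : \D \to \E^{\op}$, applied to the morphism $(\zeta^F_X)^{-1} : \lv F(X) \to F(\lv X)$ in $\D$.

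The only nontrivial step is the bookkeeping in the first paragraph, where one must carefully use naturality of $G_2$ to reorder tensor factors so that the composite $G^0 \circ G(F^0) \circ GF(\ev_X) \circ G(F_2(\lv X, X)) \circ G_2(F\lv X, FX)$ assembles into $\overline{\ev}_{GF(X)}$. Once the $\zeta$-identity is in hand, the $\xi$-identity follows purely formally from naturality, without any further calculation with the Frobenius structure maps.
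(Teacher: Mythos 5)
Your proof is correct. The paper does not actually prove this lemma---it is quoted from \cite[Lemma~3.1]{shimizu2015pivotal} with no argument given---and your route (identifying $\zeta^{GF}_X$ and $\zeta^G_{F(X)}\circ G(\zeta^F_X)$ via the bijection $\Hom(A,\lv B)\cong\Hom(A\otimes B,\unit)$ by showing both mates equal $\overline{\ev}_{GF(X)}$, then deducing the $\xi$-identity formally from naturality of $\zeta^G$ applied to $(\zeta^F_X)^{-1}$) is the standard proof of the cited result, with all the key steps (naturality of $G_2$, the composite structure maps $(GF)_2=G(F_2)\circ G_2$ and $(GF)^0=G^0\circ G(F^0)$) correctly in place.
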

\begin{proof}
The same argument as \cite[Lemma~3.1]{shimizu2015pivotal} applies.
\end{proof}


\subsubsection*{Pivotal categories and functors}
A monoidal category with left duals, equipped with a natural isomorphism $\fp=\{\fp_X:X\rightarrow \lv\lv X\}_{X\in\C}$ satisfying $\fp_{X\otimes Y} = \fp_X\otimes \fp_Y$ for all $X,Y\in\C$
is called \textit{pivotal}. In a pivotal category, for each object $X$, $\lv X$ is also a right dual to $X$ with co/evaluation maps 
\begin{equation*}
    \widetilde{\coev}_X := (\id_{\lv X}\otimes \fp_X^{-1})\coev_{\lv X} \;\;\;\; \text{and} \;\;\;\; \widetilde{\ev}_X := \ev_{\lv X} (\fp_X \otimes \id_{\lv X}).
\end{equation*}
We will use this right dual when working with pivotal categories.

\smallskip

In \cite{ng2007higher}, the notion of a strong monoidal functor preserving pivotal structure was introduced. Generalizing it, we give the following definition.

\begin{definition}\label{defn:pivotalFunctor}
A Frobenius monoidal functor $F:\C\rightarrow\D$ between pivotal categories is said to \textit{pivotal} if it satisfies 
\begin{equation}\label{eq:pivotalFunctor}
    \fp^{\D}_{F(X)} = \xi^F_X \circ F(\fp^{\C}_X)\hspace{1cm} (X\in\C). 
\end{equation} 
\end{definition}

\begin{lemma}\label{lem:pivShimizuB}
Let $\C\xrightarrow{F}\D\xrightarrow{G}\E$ be a sequence of Frobenius monoidal functors between rigid monoidal categories. If $ \C,\; \D, \; \E$ are pivotal and $F,\; G $ are pivotal functors, then so is their composition $G\circ F$.
\end{lemma}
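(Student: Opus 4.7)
The plan is a short diagram-chase. To show that $G\circ F$ is a pivotal functor, I need to verify the defining equation
\[
\fp^{\E}_{GF(X)} \;=\; \xi^{GF}_X \circ GF(\fp^{\C}_X)
\]
for every $X \in \C$. The two ingredients are the pivotal-functor hypotheses on $F$ and $G$ individually, together with the composition formula $\xi^{GF}_X = \xi^G_{F(X)} \circ G(\xi^F_X)$ supplied by Lemma~\ref{lem:pivShimizuA}.

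First I would apply the pivotality of $G$ at the object $F(X) \in \D$, which gives
$\fp^{\E}_{GF(X)} = \xi^G_{F(X)} \circ G(\fp^{\D}_{F(X)})$. Next, applying the pivotality of $F$ at $X \in \C$ yields $\fp^{\D}_{F(X)} = \xi^F_X \circ F(\fp^{\C}_X)$; applying the functor $G$ to this identity gives $G(\fp^{\D}_{F(X)}) = G(\xi^F_X) \circ GF(\fp^{\C}_X)$. Substituting this into the previous expression and then invoking Lemma~\ref{lem:pivShimizuA} to recognize $\xi^G_{F(X)} \circ G(\xi^F_X)$ as $\xi^{GF}_X$ produces the required equality.

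There is essentially no obstacle: this is a direct substitution using the two pieces of data already provided by Lemma~\ref{lem:pivShimizuA}. The only thing to be careful about is that the composition formula (\ref{eq:zetaXiComp}) is stated for Frobenius monoidal functors between rigid monoidal categories, but this hypothesis is already part of the assumptions, so no separate verification is needed. Naturality plays no role beyond the evident fact that $G$ is a functor.
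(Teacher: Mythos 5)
Your proof is correct and follows exactly the same chain of equalities as the paper's own argument. The only point the paper makes explicit that you leave implicit is that $G\circ F$ is itself Frobenius monoidal (citing Day--Pastro), which is needed for $\xi^{GF}$ to be defined and for the notion of ``pivotal functor'' to apply to the composite at all; this is harmlessly absorbed into your appeal to Lemma~\ref{lem:pivShimizuA}.
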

\begin{proof}
By, \cite[Proposition~4]{day2008note} $G\circ F$ is Frobenius monoidal. Also,
\[\fp^{\E}_{GF(X)} 
\stackrel{(\textnormal{\ref{eq:pivotalFunctor}})}{=}  
\xi^G_{F(X)} \circ G(\fp^{\D}_{F(X)}) 
\stackrel{(\textnormal{\ref{eq:pivotalFunctor}})}{=} 
\xi^G_{F(X)} \circ G(\xi^F_X \circ F(\fp^{\C}_X)) 
\stackrel{(\textnormal{\ref{eq:zetaXiComp}})}{=} \xi^{GF}_X \circ GF(\fp^{\C}_X).\]
Hence, $GF$ is a pivotal Frobenius functor.
\end{proof}


\subsection{Braided and ribbon categories}\label{subsec:braided}

A \textit{braided (monoidal) category} is a monoidal category $(\C,\otimes,\unit)$ equipped with a natural isomorphism $c=\{c_{X,Y}:X\otimes Y\rightarrow Y\otimes X \}_{X,Y\in\C}$ (called \textit{braiding}) satisfying the hexagon axiom. 
The \textit{mirror} of a braiding on $\C$ is defined by $c'_{X,Y}:=c_{Y,X}^{-1}$. We will let $\C^{\mir}$ denote the braided monoidal category $\C$ equipped with the mirror braiding $c'$. 
\smallskip

A \textit{ribbon category} $\C$ is a braided pivotal category such that the left and right twists coincide, that is, $\theta^l_X = \theta^r_X \; (:=\theta_X)$ for any object $X$ of $C$, where
\begin{align*}
\theta^l_{X}:=(\ev_X\otimes \id_X)(\id_{\lv X}\otimes c_{X,X})(\widetilde{\coev}_X\otimes \id_X), \\
\theta^r_X:=(\id_X\otimes \widetilde{\ev}_X)(c_{X,X}\otimes \id_{\lv X})(\id_X\otimes \coev_X).
\end{align*}

\subsubsection*{Braided and ribbon functors}
A monoidal functor $(F,F_2,F_0)$ between braided categories $(\C,c)$ and $(\D,d)$ is called \textit{braided} if it satisfies
\[ F_2(Y,X)\circ d_{F(X),F(Y)} = F(c_{X,Y})\circ F_2(X,Y) \hspace{1cm} (X,Y\in\C) .\]
A braided functor that is also a monoidal equivalence is called a \textit{braided equivalence}.
Also, a comonoidal functor $(F,F^2,F^0)$ between braided categories $(\C,c)$ and $(\D,d)$ is called \textit{cobraided} if $(F^{\op},(F^{2})^{\op}, (F^{0})^{\op})$ is braided. 

\begin{lemma}\label{lem:coBraid}
A braided strong monoidal functor is cobraided as well. \qed
\end{lemma}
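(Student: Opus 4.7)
The plan is to observe that since $F$ is strong monoidal, the structure maps $F_2(X,Y)$ and $F_0$ are isomorphisms, and the canonical comonoidal structure on $F$ is obtained by inverting them: set $F^2(X,Y) := F_2(X,Y)^{-1}$ and $F^0 := F_0^{-1}$. One checks (routinely from the hexagon/triangle axioms for $F_2,F_0$) that this indeed makes $(F,F^2,F^0)$ into a comonoidal functor, so only the cobraided condition remains to be verified.

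For that, I would start from the hypothesis that $F$ is braided, namely
\begin{equation*}
    F_2(Y,X)\circ d_{F(X),F(Y)} \;=\; F(c_{X,Y})\circ F_2(X,Y) \qquad (X,Y\in\C),
\end{equation*}
and compose on the left with $F^2(Y,X)=F_2(Y,X)^{-1}$ and on the right with $F^2(X,Y)=F_2(X,Y)^{-1}$. This immediately yields
\begin{equation*}
    d_{F(X),F(Y)} \circ F^2(X,Y) \;=\; F^2(Y,X) \circ F(c_{X,Y}),
\end{equation*}
which is precisely the cobraided axiom for $(F^{\op},(F^2)^{\op},(F^0)^{\op})$ between the braided categories $(\C^{\op},c^{\op})$ and $(\D^{\op},d^{\op})$.

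There is essentially no obstacle here: the content of the lemma is just that invertibility of $F_2$ lets one move the monoidal braiding condition across the equality. The only mild subtlety worth flagging explicitly is that the natural comonoidal structure on a strong monoidal functor is given by the inverses of its monoidal coherence data, so that the notions of ``braided'' and ``cobraided'' become formally equivalent for strong monoidal functors (and a completely analogous one-line argument shows the converse, that a cobraided strong monoidal functor is braided).
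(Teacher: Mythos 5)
Your proof is correct and is exactly the argument the paper has in mind: the lemma is stated with an omitted (immediate) proof, and the intended verification is precisely to take $F^2=F_2^{-1}$, $F^0=F_0^{-1}$ and transport the braided axiom across the equality using invertibility of $F_2$. The only nitpick is terminological: the coherence conditions for $F_2,F_0$ that you invert are the associativity/unitality axioms of a monoidal functor, not ``hexagon/triangle'' axioms (those names belong to the braiding and the monoidal category itself).
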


A braided functor $F:\C\rightarrow\D$ between ribbon categories is called a \textit{ribbon functor} if it preserves twists, that is, $F(\theta^{\C}_X)=\theta^{\D}_{F(X)}$ for all $X$ in $\C$. We will need the following result later.

\begin{proposition}\cite[Proposition~4.4]{mulevicius2022condensation}\label{prop:brPivotalFunctor}
A braided Frobenius functor between ribbon categories is ribbon if and only if it is pivotal and cobraided. \qed
\end{proposition}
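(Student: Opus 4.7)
The plan is to express the ribbon twist purely in terms of the braiding, the pivotal structure, and the (co)evaluation maps, and then push $F$ through this expression while tracking how the monoidal structure $F_2$, the comonoidal structure $F^2$, and the duality transformations $\zeta^F, \xi^F$ from \eqref{eq:zeta}--\eqref{eq:xi} interact with $c$, $\fp$, $\ev$, and $\coev$. Concretely, in any ribbon category $\E$ the right twist factors as
\[
\theta_X \;=\; (\id_X \otimes \ev_{\lv X})\circ(\id_X \otimes \fp_X \otimes \id_{\lv X})\circ(c_{X,X} \otimes \id_{\lv X})\circ(\id_X \otimes \coev_X),
\]
so the ribbon identity $F(\theta^{\C}_X) = \theta^{\D}_{F(X)}$ amounts, in principle, to coherent compatibility of $F$ with each of $c$, $\fp$, $\ev$, $\coev$ separately.

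For the ``$\Leftarrow$'' direction, I would start from the formula above for $\theta^{\C}_X$, apply $F$, and then insert identities built from $F_2, F_2^{-1}$ on the monoidal side and $F^2, F^0$ on the comonoidal side to convert each ingredient one at a time: the braided hypothesis turns $F(c^{\C}_{X,X})$ into $c^{\D}_{F(X),F(X)}$ modulo $F_2$'s; the pivotal hypothesis \eqref{eq:pivotalFunctor} replaces $F(\fp^{\C}_X)$ by $(\xi^F_X)^{-1}\circ\fp^{\D}_{F(X)}$; and the cobraided hypothesis, together with the defining formulas for $\zeta^F$ and $\xi^F$, converts $F(\ev^{\C}_{\lv X})$ and $F(\coev^{\C}_X)$ into the corresponding morphisms in $\D$ (this is where cobraided is genuinely needed, since $\coev$ meets $F$ through the comonoidal side via duality). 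After cancellation of $\zeta^F, \xi^F$ factors using the snake identities, what remains is exactly $\theta^{\D}_{F(X)}$.

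For the ``$\Rightarrow$'' direction, I would reorganize the calculation using the Drinfeld-type natural isomorphism $u_X : X \to \lv\lv X$ that is built from the braiding and (co)evaluation maps alone and satisfies $\fp_X = u_X \circ \theta_X$ in any ribbon category. The key intermediate lemma I would establish is that a braided Frobenius functor $F$ is cobraided if and only if $\xi^F_X\circ F(u^{\C}_X) = u^{\D}_{F(X)}$; this is shown by substituting the definition of $u$ and pushing $F$ past the relevant $c^{-1}$ and $\coev$ using $F^2$. Granted this lemma, the factorization $\fp = u\circ\theta$ shows that, once any two of ``pivotal'', ``cobraided'', and ``ribbon'' hold, so does the third, and in particular the ribbon hypothesis forces both pivotal and cobraided.

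The main obstacle is the careful bookkeeping in the diagram chases involving $\zeta^F$ and $\xi^F$: these encode the duality compatibility of a (non-strong) Frobenius monoidal functor, and several applications of the coherence identities of Section~\ref{subsec:duality} are needed to verify that the cobraided hypothesis exactly compensates for the failure of strong monoidality that would otherwise make Lemma~\ref{lem:coBraid} applicable. Once the auxiliary lemma equating cobraided with preservation of $u$ is in place, the equivalence in the proposition follows formally from $\fp = u\circ\theta$.
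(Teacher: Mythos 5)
The paper itself offers no proof of this proposition: it is quoted directly from \cite[Proposition~4.4]{mulevicius2022condensation}, so there is no internal argument to compare yours against and your attempt has to stand on its own. Your ``$\Leftarrow$'' direction is a reasonable plan and is essentially the expected computation: write $\theta^r_X$ in terms of $c$, $\fp$, $\ev$, $\coev$ (your formula agrees with the paper's $\theta^r_X$ after expanding $\widetilde{\ev}_X$), transport $\ev$ and $\coev$ through $F$ via the Frobenius structure and the duality transformations $\zeta^F,\xi^F$, use braidedness to move $F_2$ past the braiding, cobraidedness to move $F^2$ past it, and pivotality to replace $F(\fp^{\C}_X)$ by $(\xi^F_X)^{-1}\fp^{\D}_{F(X)}$. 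Carrying out that bookkeeping would prove this implication.

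The ``$\Rightarrow$'' direction, however, has a genuine logical gap. From $\fp = u\circ\theta$ and your intermediate lemma you correctly obtain that preservation (suitably twisted by $\xi^F$) of any two of $\fp$, $u$, $\theta$ implies preservation of the third. But the ribbon hypothesis gives you preservation of $\theta$ only --- one of the three --- and ``any two imply the third'' together with one of them does not yield the other two; it only tells you that $F$ is pivotal if and only if it preserves $u$. You still need an independent argument that a braided Frobenius monoidal functor between ribbon categories which preserves twists is cobraided (or, alternatively, pivotal); that is where the real content of this implication lies, and your sketch does not supply it. A second, smaller issue: the ``only if'' half of your intermediate lemma (preservation of $u$ implies cobraided) is asserted without justification and is not obvious, since preserving the one-variable natural transformation $u_X$ is a priori weaker than the two-variable compatibility of $F^2(X,Y)$ with $c_{X,Y}$; the ``$\Leftarrow$'' direction of the proposition only uses the easy half of your lemma, but ``$\Rightarrow$'' would need the hard half even after the missing step above is filled in.
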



\subsection{Algebras in monoidal categories}\label{subsec:algebras}
Take $\C$ to be a monoidal category.
\smallskip

An \textit{algebra} in $\C$ is a triple $(A,m,u)$ consisting of an object  $A\in \C$, and morphisms $m:A\otimes A\rightarrow A$, $u:\unit\rightarrow A$  in $\C$  satisfying associativity and unitality constraints. A \textit{coalgebra} in $\C$ is a triple $(A,\Delta,\nu)$ consisting of an object  $A \in \C$, and morphisms $\Delta: A \rightarrow A \otimes A$, $\varepsilon: A \to \unit$  in $\C$ satisfying coassociativity and counitality constraints.

\subsubsection*{Frobenius algebras}
A \textit{Frobenius algebra} in $\C$ is a 5-tuple $(A,m,u,\Delta,\nu)$ where $(A,m,u)$ is an algebra and $(A,\Delta,\nu)$ is a coalgebra so that 
$(m \otimes \text{id}_A)(\text{id}_A \otimes \Delta) = \Delta m = (\text{id}_A \otimes m)(\Delta \otimes \text{id}_A)$ holds. 

Now, suppose that $\C$ is a $\kk$-linear monoidal category and $(A,m,u,\Delta,\nu)$ is a Frobenius algebra in $\C$. If $m\; \Delta = \beta_A \; \id_A$ holds for some $\beta_A\in \kk^{\times}$, we call the Frobenius algebra \textit{separable}. If furthermore, $\nu\; u = \beta_{\unit} \; \id_{\unit}$ for some $\beta_{\unit}\in \kk^{\times}$, we call it \textit{special} \cite{frohlich2006correspondences}.


\subsubsection*{Symmetric Frobenius algebras}
A Frobenius algebra $(A,m,u,\Delta,\nu)$ in a pivotal monoidal category $\C$ is called \textit{symmetric} \cite[Definition~2.22]{frohlich2006correspondences} if it satisfies 
\begin{equation}\label{eq:symalg}
    (\nu \; m \otimes \id_{\lv A})(\id_A\otimes\coev_A) =   
    (\id_{\lv A} \otimes \nu \; m)(\id_{\lv A} \otimes \fp_A^{-1}\otimes \id_A)(\coev_{\lv A} \otimes \id_A) .
\end{equation}

\begin{lemma}\label{lem:symFrobalternate}
Let $(A,m,u,\Delta,\nu)$ be a Frobenius algebra in a pivotal category $\C$. Then $A$ is symmetric if and only if the following holds
\begin{equation*}
    (\ev_A\otimes \id_A)(\id_{\lv A} \otimes \Delta\; u) = 
    (\id_A\otimes \ev_{\lv A})(\id_A\otimes \fp_A \otimes \id_{\lv A})(\Delta\; u \otimes \id_{\lv A}).
\end{equation*}
\end{lemma}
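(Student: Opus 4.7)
The plan is to express both sides of (\ref{eq:symalg}) and both sides of the claimed equation as ``mates'' under a self-duality of $A$. First I would record that, for any Frobenius algebra $(A,m,u,\Delta,\nu)$, the morphisms $\nu m\colon A\otimes A\to\unit$ and $\Delta u\colon\unit\to A\otimes A$ satisfy the snake identities
\[
(\id_A\otimes \nu m)(\Delta u\otimes \id_A) = \id_A = (\nu m\otimes \id_A)(\id_A\otimes \Delta u),
\]
which follow from the two halves of the Frobenius equation $\Delta m = (\id_A\otimes m)(\Delta\otimes \id_A) = (m\otimes \id_A)(\id_A\otimes \Delta)$ together with (co)unitality. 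These identities exhibit $A$ as its own left and right dual, with pairing $\nu m$ and copairing $\Delta u$.

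Abbreviate the LHS and RHS of (\ref{eq:symalg}) by $L,R\colon A\to \lv A$, and the LHS and RHS of the equation in the lemma by $L',R'\colon \lv A\to A$. The central claim is that $L' = L^{-1}$ and $R' = R^{-1}$. To verify $L\circ L' = \id_{\lv A}$, I would compose the defining formulas and read off a string diagram on five strands (input $\lv A$; the two $A$-legs of $\Delta u$; the $A$- and $\lv A$-legs of $\coev_A$; output $\lv A$), in which $\ev_A$ caps the input with the first $A$-leg of $\Delta u$ while $\nu m$ caps the second $A$-leg of $\Delta u$ with the $A$-leg of $\coev_A$. The three $A$-strands in the middle form precisely the configuration of the Frobenius snake recorded above and so collapse to $\id_A$; what remains is the ordinary rigidity snake $(\ev_A\otimes \id_{\lv A})(\id_{\lv A}\otimes \coev_A) = \id_{\lv A}$. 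The reverse composite $L'\circ L = \id_A$ is entirely analogous, using the other Frobenius snake. Repeating this argument with the pivotal right-duality maps $\widetilde{\ev}_A := \ev_{\lv A}(\fp_A\otimes \id_{\lv A})$ and $\widetilde{\coev}_A := (\id_{\lv A}\otimes \fp_A^{-1})\coev_{\lv A}$ in place of $\ev_A,\coev_A$ yields $R\circ R' = \id_{\lv A}$ and $R'\circ R = \id_A$; the right-handed rigidity snake used here holds by the pivotal axiom.

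Given $L' = L^{-1}$ and $R' = R^{-1}$, the equivalence is immediate: $L = R$ if and only if $L^{-1} = R^{-1}$, i.e., $L' = R'$. I expect the main (minor) obstacle to be the careful bookkeeping of the pivotal isomorphism $\fp_A$ when running the string-diagram reduction for $R\circ R'$ with $\widetilde{\ev}_A$ and $\widetilde{\coev}_A$ in place of $\ev_A$ and $\coev_A$, but this is routine once the $(\nu m, \Delta u)$-snakes are in hand.
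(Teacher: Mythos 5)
Your proposal is correct and follows essentially the same route as the paper: the paper's proof likewise observes that each side of the lemma's equation is the inverse of the corresponding side of (\ref{eq:symalg}) (with the $(\nu m,\Delta u)$-snake identities and the left/right rigidity snakes doing the work) and concludes that $L=R$ iff $L^{-1}=R^{-1}$. You simply spell out the inverse verifications that the paper leaves as an observation.
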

\begin{proof}
Observe that, $(\id_A\otimes \ev_{\lv A})(\id_A\otimes \fp_A \otimes \id_{\lv A})(\Delta\; u \otimes \id_{\lv A})$ is the inverse of the morphism $(\id_A \otimes \nu \; m)(\id_{\lv A} \otimes \fp_A^{-1}\otimes \id_A)(\coev_{\lv A} \otimes \id_A)$. Similarly, $(\ev_A\otimes \id_A)(\id_{\lv A} \otimes \Delta\; u)$ is the inverse of $ (\nu \; m\otimes \id_{\lv A}) (\id_A\otimes \coev_A)$. Thus by (\ref{eq:symalg}), the claim follows.
\end{proof}


\subsubsection*{(Braided) commutative algebras}
Let $\C$ be a braided category. Then an algebra $(A,m,u)\in \C$ is called \textit{(braided) commutative} if it satisfies $m\; c_{A,A} = m$. A coalgebra $(A,\Delta,\nu)$ is called \textit{(braided) cocommutative} if it satisfies $c_{A,A}\,\Delta=\Delta$. 
It is straightforward to see that if $F:\C\rightarrow\D$ is a (co)braided (co)monoidal functor between braided categories and $A$ is (co)commutative (co)algebra in $\C$, then $F(A)$ is a (co)commutative (co)algebra in $\D$.

The following lemma summarizes the relationship between the various functors and Frobenius algebras that we have defined above.

\begin{lemma}\label{lem:mon-preserve}
Let $(F:\C\rightarrow\D,F_2,F_0,F^2,F^0)$ be a Frobenius monoidal functor between monoidal categories. Take $(A,m,u,\Delta,\varepsilon)$ a Frobenius algebra in $\C$. Then the following statements hold.
\begin{enumerate}
    \item[\upshape{(a)}] $(F(A),\; F(m)F_2(A,A),\; F(u)F_0,\; F^2(A,A)F(\Delta),\; F^0F(\nu)) $ is a Frobenius algebra in $\D$.
    \item[\upshape{(b)}] Suppose that $\D$ is $\kk$-linear. If $F$ is separable (resp., special) and $A$ is separable (resp., special) Frobenius algebra in $\C$, then $F(A)$ is a separable (resp., special) Frobenius algebra in $\D$.
    \item[\upshape{(c)}] If $\C,\D$ are pivotal categories, $F$ is a pivotal monoidal functor and $A$ is a symmetric Frobenius algebra, then $F(A)$ is symmetric Frobenius as well.
\end{enumerate}
\end{lemma}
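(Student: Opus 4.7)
This is essentially \cite[Proposition~5]{day2008note} and reduces to direct verification. The algebra axioms for $(F(A), F(m) F_2(A,A), F(u) F_0)$ follow from the monoidal functor axioms of $F$ together with the associativity and unitality of $(A,m,u)$; dually for the coalgebra structure. For the Frobenius compatibility, I would combine the two Frobenius axioms for $F$ (relating $F_2$ and $F^2$ after a tensor by identity on one side) with the Frobenius condition $(m \otimes \id_A)(\id_A \otimes \Delta) = \Delta m = (\id_A \otimes m)(\Delta \otimes \id_A)$ for $A$. The computation is straightforward but tedious; using string-diagram notation for monoidal/comonoidal functors makes both sides visibly equal.

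\textbf{Part (b).} This is a one-line calculation once (a) is in hand. For separability,
\[
\bigl(F(m) F_2(A,A)\bigr) \circ \bigl(F^2(A,A) F(\Delta)\bigr) \;=\; F(m) \circ \bigl(F_2(A,A) \circ F^2(A,A)\bigr) \circ F(\Delta) \;=\; \beta_2\, F(m \Delta) \;=\; \beta_2 \beta_A \, \id_{F(A)},
\]
using the separability of $F$ and of $A$. For speciality one additionally computes
\[
(F^0 F(\nu)) \circ (F(u) F_0) \;=\; F^0 \circ F(\nu u) \circ F_0 \;=\; \beta_{\unit}\, F^0 \circ F_0 \;=\; \beta_{\unit} \beta_0\, \id_{\unit_{\D}},
\]
using that $F(\id_{\unit_{\C}}) = \id_{F(\unit_{\C})}$ and the speciality of $F$.

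\textbf{Part (c).} This is the main content of the lemma, and it is essentially a bookkeeping exercise in the duality transformations $\zeta^F$ and $\xi^F$. The plan is to use the alternative form of symmetry from Lemma~\ref{lem:symFrobalternate} and translate the symmetric Frobenius condition for $F(A)$, which involves $\ev_{F(A)}$, $\coev_{F(A)}$ and $\fp^{\D}_{F(A)}$, into one involving $F$ applied to the corresponding data for $A$. Concretely, by the definition of $\zeta^F$ in \eqref{eq:zeta}, the maps $\ev_{F(A)}$ and $\ev_{\lv F(A)}$ factor through $\overline{\ev}_{F(A)} = F^0 F(\ev_A) F_2(\lv A, A)$ and $(\zeta^F_A)^{-1}$ (and analogously for $\coev$); the pivotal condition $\fp^{\D}_{F(A)} = \xi^F_A \circ F(\fp^{\C}_A)$ relates $\fp^{\D}_{F(A)}$ to $F(\fp^{\C}_A)$ via $\xi^F_A = \lv((\zeta^F_A)^{-1}) \circ \zeta^F_{\lv A}$. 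Substituting these into both sides of Lemma~\ref{lem:symFrobalternate} applied to $F(A)$, and using that $F$ is Frobenius monoidal (so that $F(\Delta u) = F^2(A,A) F(\Delta) F(u) F_0$ up to $F_0$-adjustments that will cancel), reduces the target identity for $F(A)$ to $F$ applied to the symmetry identity for $A$, which holds by hypothesis.

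The main obstacle is in part (c): one must track the $\zeta^F$ and $\xi^F$ insertions carefully so that they cancel in pairs, and verify that the formulas for $\overline{\ev}_{F(A)}$, $\overline{\coev}_{F(A)}$ match the natural evaluation/coevaluation on $F(A)$ under the identification $F(\lv A) \cong \lv F(A)$. Once the pivotal compatibility \eqref{eq:pivotalFunctor} is inserted in the right spot, both sides of the symmetry condition for $F(A)$ reduce to $F$ applied to the two sides of the symmetry condition for $A$, completing the proof.
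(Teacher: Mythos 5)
Your proposal matches the paper's proof, which simply cites \cite[Corollary~5]{day2008note} for (a), declares (b) straightforward, and cites \cite[Proposition~4.8]{mulevicius2022condensation} for (c); your part (b) computation is exactly the intended one, and your outlines for (a) and (c) are the arguments carried out in those references. The only caveat is that your part (c) is a plan rather than a completed verification, but it is the correct plan (substitute $\overline{\ev}_{F(A)}$, $\overline{\coev}_{F(A)}$ and the pivotal compatibility \eqref{eq:pivotalFunctor} into Lemma~\ref{lem:symFrobalternate} for $F(A)$ and cancel the $\zeta^F$/$\xi^F$ insertions), which is precisely what the cited proposition executes.
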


\begin{proof}
Part (a) is \cite[Corollary~5]{day2008note}, part (b) is straightforward, and part (c) is \cite[Proposition~4.8]{mulevicius2022condensation}.
\end{proof}


\subsection{Adjunctions}\label{subsec:adjunctions}
An \textit{adjoint pair}  $(L,R,\eta,\varepsilon)$ is a pair of functors $L:\C\rightarrow\D$ and $R:\D\rightarrow\C$ between categories $\C$ and $\D$ along with natural transformations $\eta=\{\eta_X:X\rightarrow RL(X)\}_{X\in\C}$ and $\varepsilon = \{\varepsilon_{Y}:LR(Y)\rightarrow Y\}_{Y\in\D}$ such that the following relations hold:
\begin{equation}\label{eq:snake}
    R(\varepsilon_Y)\; \eta_{R(Y)} = \id_{R(Y)} \hspace{1cm} \varepsilon_{L(X)}\;L(\eta_X) = \id_{L(X)}.
\end{equation} 
In this situation, we call $L$ the \textit{left adjoint} of $R$ and denote it as $R^{\la}$. Similarly, we call $R$ the \textit{right adjoint} of $L$ and denote it as $L^{\ra}$. We will also use the notation $L\dashv R$ to mean the $L,R$ form an adjoint pair with $L$ as the left adjoint. Then, $L\dashv R$ if and only if $R^{\op}\dashv L^{\op}$. 
\smallskip

A \textit{(co)monoidal adjunction} \cite[Section~2.5]{bruguieres2011hopf} is an adjunction $L\dashv R$, between monoidal categories such that $L,R$ are (co)monoidal functors, and the unit, counit of the adjunction are (co)monoidal natural transformations.
A \textit{Hopf adjunction} \cite[Section~2.8]{bruguieres2011hopf} is a comonoidal adjunction $L\dashv R$ such that the following \textit{Hopf operator morphisms} are invertible. 
\begin{equation*}\label{eq:hopfadj}
    \begin{gathered}
    H^l_{X,Y}: L(X\otimes R(Y)) \xrightarrow{L^2(X,R(Y))} L(X)\otimes LR(Y) \xrightarrow{\id_{L(X)}\otimes \varepsilon_Y} L(X)\otimes Y , \\
    H^r_{Y,X}: L(R(Y)\otimes X) \xrightarrow{L^2(R(Y),X)} LR(Y)\otimes L(X) \xrightarrow{\varepsilon_Y\otimes\id_{L(X)}} Y\otimes L(X). 
    \end{gathered}
\end{equation*} 

However, we will need the following dual notion.
A \textit{coHopf adjunction} is a monoidal adjunction $L\dashv R$ such that the following \textit{coHopf operator morphisms} are invertible for all $X\in\C$ and $Y\in \D$.
\begin{equation}\label{eq:cohopfadj}
    \begin{gathered}
    h^l_{Y,X}: R(Y)\otimes X \xrightarrow{\id_{R(Y)}\otimes \eta_X} R(Y)\otimes RL(X) \xrightarrow{R_2(Y,L(X))} R(Y\otimes L(X)), \\
    h^r_{X,Y}: X\otimes R(Y) \xrightarrow{\eta_X\otimes \id_{R(Y)}} RL(X)\otimes R(Y) \xrightarrow{R_2(L(Y),X)} R(L(X)\otimes Y). 
    \end{gathered}
\end{equation} 
We get that $L\dashv R$ is a coHopf adjunction if and only if $R^{\op}\dashv L^{\op}$ is a Hopf adjunction. Now consider the following results.

\begin{theorem}\label{thm:docadj}
Let $L:\C\rightarrow\D$ be left adjoint to $R:\D\rightarrow\C$ with unit $\eta$ and counit $\varepsilon$. Then the following hold:
\begin{enumerate}
    \item[\upshape(a)] If $L$ is strong monoidal with structure maps $L^2,L^0$, then $R$ admits the following monoidal structure making the adjunction $L\dashv R$ monoidal:
    \begin{equation*}
        R_2(Y,Y') = R(\varepsilon_Y\otimes \varepsilon_{Y'})\circ R (L^2(R(Y),R(Y'))) \circ \eta_{R(Y)\otimes R(Y')} , \;\;\;  R_0 = R(L^0) \circ \eta_{\unit_{\C}} .
    \end{equation*}
    \item[\upshape(b)] If $\C,\D$ are rigid monoidal categories, then any comonoidal adjunction between them is Hopf and any monoidal adjunction is coHopf.
\end{enumerate} 
\end{theorem}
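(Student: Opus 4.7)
The plan is to handle the two parts of the statement separately.

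For part (a), this is an instance of Kelly's doctrinal-adjunction principle. The displayed formulas for $R_2$ and $R_0$ are precisely the mates of the comonoidal structure maps $L^2, L^0$ under the adjunction $L\dashv R$. I would proceed in three steps. First, I would verify that $(R, R_2, R_0)$ satisfies the monoidal-functor axioms (associativity and the two unitality axioms); each reduces, after applying naturality of $\eta,\varepsilon$ and the snake identities \eqref{eq:snake}, to the corresponding coaxiom of $(L, L^2, L^0)$ as a comonoidal functor. Second, I would show that $\eta:\id_{\C}\to RL$ is a monoidal natural transformation, which follows formally from the definition of $R_2, R_0$ together with the snake identities. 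Third, I would check that $\varepsilon:LR\to \id_{\D}$ is a comonoidal natural transformation, which follows by an analogous argument (indeed, this is just the mate calculation read in the other direction).

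For part (b), it suffices to prove that a monoidal adjunction between rigid categories is coHopf; the comonoidal-implies-Hopf statement then follows by passing to opposite categories, using the identity ``$L\dashv R$ is coHopf iff $R^{\op}\dashv L^{\op}$ is Hopf'' already recorded in the text, together with the fact that rigidity is preserved under $(-)^{\op}$. For the coHopf case, I would construct explicit two-sided inverses for $h^l_{X,Y}$ and $h^r_{X,Y}$ using the dualities. For instance, for $h^l_{X,Y}:R(X)\otimes Y\to R(X\otimes L(Y))$, the inverse exploits the left dual $\lv L(Y)$ in $\D$: one produces a morphism $R(X\otimes L(Y))\to R(X)\otimes Y$ by inserting a coevaluation $\coev_Y$ to create a factor $Y\otimes \lv Y$, then moving $\lv Y$ inside $R$ via $R_2$ (after applying $\eta_{\lv Y}$), and finally collapsing $L(Y)\otimes L(\lv Y)\to \unit_{\D}$ inside $R$ using $L_2$, the rigid evaluation in $\D$, and the counit $\varepsilon$. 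Verification that this is a two-sided inverse reduces, via naturality of $\eta,\varepsilon$ and the snake identities \eqref{eq:snake}, to the zigzag identities of the duality data. The inverse of $h^r$ is constructed by a symmetric argument using left duals in $\C$.

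The main obstacle is part (b): writing down the correct inverse morphism and confirming the snake-type verifications requires assembling the monoidal structure of $R$, the duality data in both $\C$ and $\D$, and the unit/counit of the adjunction in a delicate composite. Part (a) is a standard doctrinal-adjunction diagram chase, routine once the mate characterization is invoked.
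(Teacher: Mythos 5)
Your route is genuinely different from the paper's, which disposes of both parts by citation (part (a) to Kelly's doctrinal adjunction, part (b) to \cite[Proposition~3.5]{bruguieres2011hopf}); a self-contained argument is a reasonable thing to supply. Your outline of part (a) is the standard mate calculation and is correct. (One small wrinkle is inherited from the paper's definitions rather than introduced by you: for $\varepsilon\colon LR\to\id_{\D}$ to be a \emph{monoidal} natural transformation, as the paper's definition of ``monoidal adjunction'' literally requires, $L$ must itself carry a monoidal structure; this is automatic in the intended setting where $L$ is strong, and your ``comonoidal transformation'' formulation is the correct mate statement.) Your reduction of the Hopf half of (b) to the coHopf half via $(-)^{\op}$ is also fine.

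The gap is in the explicit inverse of $h^l_{X,Y}$, which is exactly the step you flag as the crux. Inserting $\coev_Y\colon\unit\to Y\otimes\lv Y$ to the right of $R(X\otimes L(Y))$ yields $R(X\otimes L(Y))\otimes Y\otimes\lv Y$ with $\lv Y$ in the \emph{outermost} position; to ``move $\lv Y$ inside $R$'' you would have to transpose it past the surviving factor $Y$, which is impossible without a braiding, and the pairing you then invoke, $L(Y)\otimes L(\lv Y)\to\unit_{\D}$, is not an evaluation for a left dual (that would be $\lv Z\otimes Z\to\unit$, with the dual on the left). The construction must use the \emph{right} dual: insert $\widetilde{\coev}_Y\colon\unit\to Y\rv\otimes Y$, so that $Y\rv$ lands adjacent to $R(X\otimes L(Y))$; push it inside via $h^l_{X\otimes L(Y),\,Y\rv}$ to reach $R(X\otimes L(Y)\otimes L(Y\rv))\otimes Y$; then cancel with $R(\id_X\otimes\widetilde{\ev}_{L(Y)})\otimes\id_Y$, where $\widetilde{\ev}_{L(Y)}=L_0^{-1}\circ L(\widetilde{\ev}_Y)\circ L_2(Y,Y\rv)$ exhibits $L(Y\rv)$ as a right dual of $L(Y)$ --- this is where one uses that the left adjoint of a monoidal adjunction is \emph{strong} monoidal, a fact your sketch should make explicit. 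Note also that the final collapse $L(\unit)\to\unit$ is given by $L_0^{-1}$, not by the adjunction counit $\varepsilon$. With these corrections the two-sided-inverse verification goes through by naturality, the snake identities (\ref{eq:snake}), the zigzag identities for $Y\rv$, and compatibilities of $h^l$ with $R_2$ of the kind recorded in Lemma~\ref{lem:operatorCoHopf}(a); the inverse of $h^r$ is then built symmetrically using left duals.
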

\begin{proof}
    Part (a) is a classical result of Kelly \cite[Section~2.1]{kelly1974doctrinal}. Part (b) follows from \cite[Proposition~3.5]{bruguieres2011hopf}.
\end{proof}


\subsection{Drinfeld centers}\label{subsec:drinfeld}
The \textit{Drinfeld center} of a monoidal category $\C$, denoted $\Z(\C)$, is defined as the category with objects as pairs $(X,\sigma)$, where $X$ is an object in $\C$, and $\sigma$ is a half-braiding on $X$. A \textit{half-braiding} on $X$ is a natural isomorphism $\sigma=\{\sigma_Y:Y\otimes X \rightarrow X\otimes Y\}_{Y\in \C}$ satisfying 
$\sigma_{Y\otimes Z} = (\sigma_Y\otimes \id_Z)(\id_Y \otimes \sigma_Z)$ for all $Y,Z\in \C$.
The morphisms $(X,\sigma)\rightarrow (Y,\sigma')$ are given by morphisms $f\in \Hom_{\C}(X,Y)$ satisfying 
$(f\otimes \id_Z)\sigma_Z = \sigma'_Z(\id_Z\otimes f)$.
The monoidal product is $(X,\sigma)\otimes (Y,\sigma') = (X\otimes Y, \gamma$) where $\gamma_Z:= (\id_X\otimes \sigma'_Z) (\sigma_Z\otimes\id_Y)$. We also have the forgetful functor $U_{\C}:\Z(\C)\rightarrow \C$ given by $(X,\sigma) \mapsto X$ which is strong monoidal.


\section{Frobenius monoidal functors from (co)Hopf adjunctions}\label{sec:coHopf}
The goal of this section is to construct separable, special, pivotal, ribbon Frobenius monoidal functors from coHopf adjunctions $U\dashv R$. To do this, we start by recalling results from \cite{balan2017hopf} which established sufficient conditions under which the right adjoint $R$ is Frobenius monoidal (see Theorem~\ref{thm:frobcoHopf}). 
We recall Balan's result and discuss some preliminaries in Section~\ref{subsec:3.prel}. We generalize to the separable and special Frobenius setting in Section~\ref{subsec:3.spesep}, to the pivotal setting in Section~\ref{subsec:3.pivotal}, and to the ribbon setting in Section~\ref{subsec:3.ribbon}.


\subsection{Preliminaries}\label{subsec:3.prel}
In the following, we will often replace $\otimes$ by $\cdot$, in order to fit equations. Equalities marked as $(N)$ will commute because of naturality. 
Consider the following conditions on a functor $U:\C\rightarrow\D$, which will be used throughout the rest of this section. 

\begin{condition}\label{cond:main}
$(U:\C\rightarrow\D,U^2,U^0)$ is a strong monoidal functor between abelian monoidal categories admitting a right adjoint $R$ (with unit $\eta^r$, counit $\varepsilon^r$) such that:
\begin{enumerate}
    \item[\upshape{(a)}] $U\dashv R$ is a coHopf adjunction,
    \item[\upshape{(b)}] $R$ is exact, and
    \item[\upshape{(c)}] $R$ is faithful.
\end{enumerate} 
\end{condition}

\begin{remark}\label{rem:coHopf}
Under these conditions, the functor $R$ is monoidal with structure maps $R_2,R_0$ in Theorem~\ref{thm:docadj}(a) making $U\dashv R$ a monoidal adjunction. Hence, condition \upshape{(a)} is satisfied when $\C,\D$ are rigid [Theorem~\ref{thm:docadj}(b)]. In other words, the left and right coHopf operators $h^l, h^r$ of (\ref{eq:cohopfadj}) are invertible in this case. 
\end{remark}

Now consider the following result connecting the Frobenius properties of the right adjoint $R$ and the algebra $R(\unit)$. 

\begin{theorem}\label{thm:frobcoHopf}
Suppose that Condition \ref{cond:main} is satisfied. Then, $R$ is Frobenius monoidal if and only if the algebra $R(\unit)$ is a Frobenius algebra in $\C$. 

In particular, when $R(\unit)$ is Frobenius with comultiplication $\Delta$ and counit $\nu$, then we get the following results: 
\begin{enumerate}
    \item[\upshape{(a)}]  $R$ is Frobenius monoidal with the following comonoidal constraints:
    \begin{equation}\label{eq:R^2}
    R^2(X,Y) = (h^l_{X,R(Y)})^{-1} \circ R(\id_X \otimes \eta^l_Y), \hspace{1cm} R^0 = \nu.
    \end{equation} 
    
    \item[\upshape{(b)}] $R\dashv U$ is a Hopf adjunction with the counit $\varepsilon^l$ given by
    \begin{equation*}
        \varepsilon^l(X): RU(X) = R(\unit\otimes U(X))\xrightarrow{(h^l_{\unit,X})^{-1}} R(\unit) \otimes X \xrightarrow{\nu\otimes \id_{X}} X,   
    \end{equation*}
    and the unit $\eta^l:X\rightarrow UR(X)$ as the unique morphism making the following equation hold:
    \begin{equation}\label{eq:etaL}
        \eta^l_X \circ \varepsilon^r_X = UR(\varepsilon^r_X) \circ \varepsilon^r_{URUR(X)} \circ U(h^l_{\unit,RUR(X)}) \circ U(\id_{R(\unit)}\cdot h^l_{\unit,R(X)}) \circ U(\Delta u\cdot \id_{R(X)}).
    \end{equation}
\end{enumerate}
\end{theorem}

Theorem~\ref{thm:frobcoHopf} is proved below after we prove a small lemma. Consider an adjoint pair $L\dashv L^{\ra}$ with $L:\cA\rightarrow\B$. Let $\varepsilon$ be the counit and $T=L^{\ra}L$ be the corresponding monad on $\cA$ \cite[Chapter~6]{mac2013categories}. Then, there is a canonical comparison functor $K:\B\rightarrow\cA^T$, where $\cA^T$ is the category of $T$-modules. The adjunction $L\dashv L^{\ra}$ is called \textit{premonadic} if the functor $K$ is full and faithful. Dually, for a functor $U$, an adjunction $U^{\la}\dashv U$ is called \textit{precomonadic} if $U^{\op}\dashv (U^{\la})^{\op}$ is premonadic.
By \cite[Corollary~3.9 and Theorem~3.11]{barr2000toposes}, the following are equivalent:
\begin{enumerate}
    \item $L\dashv L^{\ra}$ is precomonadic;
    
    \smallskip

    \item $\varepsilon_X$ is the cokernel of some parallel pair of morphisms $\forall$ $X\in\B$;
        
    \item $\begin{tikzcd}
        LL^{\ra}LL^{\ra}(X) \arrow[r, "LL^{\ra}(\varepsilon_X)", shift left] \arrow[r, "\varepsilon_{L L^{\ra}(X)}"', shift right] & L L^{\ra}(X) \arrow[r, "\varepsilon_X"] & X,
    \end{tikzcd} \text{ is a coequalizer $\forall$ $X\in \B$.}$ \hfill\refstepcounter{equation}\textup{(\theequation)}\label{eq:coequalizer}
    
\end{enumerate}
The following lemma is probably well-known, but we could not find a proof.

\begin{lemma}\label{lem:premonadic}
Let $L\dashv R:\B\rightarrow\cA$  be an adjunction between abelian categories with counit $\varepsilon$. Then, the following are equivalent:
\begin{align*}
    L\dashv R\; \text{is precomonadic}  \; \iff \; \varepsilon_X \; \textit{is epic} \; \forall\; X\in\B  \; \iff \; R\; \text{is faithful}.
\end{align*}
\end{lemma}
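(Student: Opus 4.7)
The plan is to prove the two equivalences separately. The first, that $L\dashv R$ is precomonadic if and only if $\varepsilon_X$ is epic for every $X\in\B$, relies on the characterization of precomonadicity cited from~\cite{barr2000toposes} together with the abelian hypothesis on $\B$. The second, that $\varepsilon_X$ is epic for every $X\in\B$ if and only if $R$ is faithful, is purely categorical, using naturality of $\varepsilon$ and the snake identities~(\ref{eq:snake}).

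For the first equivalence I would invoke condition~(2) recalled just above the lemma: precomonadicity is equivalent to each $\varepsilon_X$ being the cokernel of some parallel pair of morphisms in $\B$. Since $\B$ is abelian, a morphism is epic if and only if it is a cokernel---every epimorphism is the cokernel of its own kernel, and every cokernel is automatically epic. Substituting this translation into condition~(2) immediately yields the equivalence with the statement that $\varepsilon_X$ is epic for all $X\in\B$.

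For the second equivalence I would argue directly. Given $f,g\colon X\to Y$ in $\B$, naturality of $\varepsilon$ gives $f\,\varepsilon_X = \varepsilon_Y\,LR(f)$ and $g\,\varepsilon_X = \varepsilon_Y\,LR(g)$. If each $\varepsilon_X$ is epic, then $R(f)=R(g)$ forces $f\,\varepsilon_X = g\,\varepsilon_X$, hence $f=g$, so $R$ is faithful. Conversely, assume $R$ is faithful and $f\,\varepsilon_X = g\,\varepsilon_X$; applying $R$ yields $R(f)\,R(\varepsilon_X) = R(g)\,R(\varepsilon_X)$, and the snake identity $R(\varepsilon_X)\,\eta_{R(X)} = \id_{R(X)}$ from~(\ref{eq:snake}) shows that $R(\varepsilon_X)$ is a split epimorphism, which we may cancel to obtain $R(f)=R(g)$; faithfulness of $R$ then forces $f=g$, so $\varepsilon_X$ is epic.

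The main obstacle is essentially nil: identifying epimorphisms with cokernels in abelian categories converts the cited Barr--Wells condition into the desired elementary statement, after which the remaining arguments are routine diagram chases. It is worth flagging that the abelian hypothesis enters \emph{only} through the first equivalence; the assertion that $\varepsilon$ is pointwise epic precisely when $R$ is faithful holds for any adjunction between arbitrary categories.
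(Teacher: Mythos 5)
Your proof is correct and follows essentially the same route as the paper: the first equivalence is obtained from the Barr--Wells characterization~(\ref{eq:coequalizer}) combined with the fact that in an abelian category epimorphisms coincide with cokernels, and the second equivalence is the standard fact that $R$ is faithful iff the counit is pointwise epic. The only difference is that the paper cites Mac Lane for the second equivalence whereas you prove it directly (correctly, via naturality of $\varepsilon$ and the split-epi $R(\varepsilon_X)$); your observation that this part requires no abelian hypothesis is also accurate.
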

\begin{proof}
By the definition of an abelian category, a morphism $f$ in it is an epimorphism if and only if it is a cokernel of some parallel pair of morphisms. Thus, using the equivalent definition of being precomonadic given in (\ref{eq:coequalizer}), we get that $\varepsilon_X$ is an epimorphism if and only if $L\dashv R$ is precomonadic. Lastly, $\varepsilon_X$ is epic if and only if $R$ is faithful \cite[Chapter~4]{mac2013categories}. Thus, the proof is finished.  
\end{proof}

\begin{proof}[Proof of Theorem~\ref{thm:frobcoHopf}]
($\Rightarrow$): Suppose that $R$ is Frobenius monoidal. Then, since $\unit$ is Frobenius, $R(\unit)$ is Frobenius by Lemma~\ref{lem:mon-preserve}(a).
\smallskip

\noindent
($\Leftarrow$):
By Condition~\ref{cond:main}(b), $U\dashv R$ is a coHopf adjunction, which implies that $R^{\op}\dashv U^{\op}$ is Hopf adjunction. Also, $R(\unit)$ is a Frobenius algebra in $\C$; this implies that $R^{\op}(\unit)$ is a Frobenius algebra in $\C^{\op}$. As $R$ is faithful, by Lemma~\ref{lem:premonadic}, we get that $U\dashv R$ is premonadic. Hence, $R^{\op}\dashv U^{\op}$ is precomonadic.  Thus, we can apply \cite[Proposition~4.5]{balan2017hopf} to the adjunction $R^{\op}\dashv U^{\op}$ to get that $R^{\op}$ is Frobenius monoidal. Hence, $R$ is Frobenius monoidal. Similarly, by \cite[Theorem~4.4]{balan2017hopf}, $R\dashv U$ is a Hopf adjunction. Finally, the claim about the expressions for $\varepsilon^l,\eta^l,R^2,R^0$ follows by translating the expressions in \cite{balan2017hopf} into our setting.
\end{proof}

The next properties of the coHopf operators $h^l$ of (\ref{eq:cohopfadj}) will be used in the following sections.

\begin{lemma}\label{lem:operatorCoHopf}
The coHopf operators $h^l$ satisfy the following relations:
\begin{enumerate}
    \item[\upshape{(a)}] \hspace{-0.075cm}$R_2(X,Y\otimes UR(Z)) \circ (\id_{R(X)} \otimes h^l_{Y,R(Z)}) = h^l_{X\otimes Y,R(Z)} \circ (R_2(X,Y)\otimes \id_{R(Z)})$,
    \smallskip
    \item[\upshape{(b)}] $\varepsilon^r_{X\otimes UR(Y)}\circ U(h^l_{X,R(Y)}) = (\varepsilon^r_{X} \otimes \id_{UR(Y)}) \circ U_2^{-1}(R(X),R(Y)),$ 
    \smallskip
    \item[\upshape{(c)}] $R(\id_X\otimes \varepsilon^r_Y) \circ h^l_{X,R(Y)} = R_2(X,Y),$
    \smallskip
    \item[\upshape{(d)}] $R(\id_X\otimes U_2^{-1}(Y,Z))\circ h^l_{X,Y\otimes Z} = h^l_{X\otimes U(Y),Z} \circ (h^l_{X,Y} \otimes \id_{Z}).$
\end{enumerate}
\end{lemma}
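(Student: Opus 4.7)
My plan is to treat each part by unfolding the definition $h^l_{A,B} = R_2(A, U(B)) \circ (\id_{R(A)} \otimes \eta^r_B)$ and then invoking at most one or two structural identities.

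For part (c), I would substitute this expansion into $h^l_{X,R(Y)}$, then use naturality of $R_2$ to pull $R(\id_X \otimes \varepsilon^r_Y)$ past $R_2(X, UR(Y))$, producing $R_2(X,Y) \circ (\id_{R(X)} \otimes R(\varepsilon^r_Y))$. The triangle identity $R(\varepsilon^r_Y) \circ \eta^r_{R(Y)} = \id_{R(Y)}$ then collapses the trailing factor to $\id_{R(Y)}$, leaving exactly $R_2(X,Y)$.

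For part (a), I would expand both occurrences of $h^l$ and use functoriality of $\otimes$ to combine the two $\id \otimes \eta^r_{R(Z)}$ factors (they sit in the same tensor slot on both sides). Both sides then reduce to the two halves of the associativity axiom for the monoidal functor $R$,
\[ R_2(X, Y \otimes W) \circ (\id_{R(X)} \otimes R_2(Y, W)) = R_2(X \otimes Y, W) \circ (R_2(X, Y) \otimes \id_{R(W)}), \]
applied with $W := UR(Z)$, so a single invocation of this axiom finishes the part.

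Part (b) is the only one requiring extra input, and I expect it to be the main technical step. The key ingredient is the monoidality of the counit $\varepsilon^r$ relative to the comonoidal structure $U_2^{-1}$ of the strong monoidal $U$, in the explicit form
\[ \varepsilon^r_{X \otimes Y} \circ U(R_2(X,Y)) = (\varepsilon^r_X \otimes \varepsilon^r_Y) \circ U_2^{-1}(R(X), R(Y)). \]
This identity is not hard to extract from the explicit formula for $R_2$ in Theorem~\ref{thm:docadj}(a) by a short computation using naturality of $\varepsilon^r$ applied twice together with the snake identity $\varepsilon^r \circ U(\eta^r) = \id$. With it in hand, I would apply it to the $R_2(X, UR(Y))$ factor of $h^l_{X, R(Y)}$, then use naturality of $U_2^{-1}$ to slide the surviving $U(\id_{R(X)} \otimes \eta^r_{R(Y)})$ to the right of $U_2^{-1}$, and finally use the snake identity $\varepsilon^r_{UR(Y)} \circ U(\eta^r_{R(Y)}) = \id_{UR(Y)}$ to eliminate the leftover $\varepsilon^r_{UR(Y)}$. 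The only anticipated obstacle throughout is bookkeeping: correctly tracking the source and target objects of the various natural transformations as they propagate through compositions of $U$ and $R$.
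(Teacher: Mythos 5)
Your proposal is correct and matches the paper's argument: the paper proves (a) exactly by expanding $h^l$ via (\ref{eq:cohopfadj}) and invoking the associativity constraint of the monoidal functor $R$ plus naturality, and leaves (b) and (c) as ``similar,'' which your expansions supply correctly (naturality of $R_2$ plus the triangle identity for (c); monoidality of $\varepsilon^r$, naturality of $U_2^{-1}$, and the triangle identity for (b)). The only simplification available is that your key identity for (b) need not be re-derived from the formula in Theorem~\ref{thm:docadj}(a): it is precisely the statement that $\varepsilon^r$ is a monoidal natural transformation, which is already guaranteed by the fact that $U\dashv R$ is a monoidal adjunction (Remark~\ref{rem:coHopf}).
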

\begin{proof}
We prove part \upshape{(a)} here, parts \upshape{(b)}, \upshape{(c)} and \upshape{(d)} are proved in a similar manner. \vspace{0.15cm}
{\[ 
\begin{array}{rcl}
    \text{LHS} &=& R_2(X,Y\otimes UR(Z)) \circ (\id_{R(X)} \otimes h^l_{Y,R(Z)})\\
    & \stackrel{(\textnormal{\ref{eq:cohopfadj}})}{=}&
    R_2(X,Y\otimes UR(Z)) \circ  (\id_{R(X)} \otimes R_2(Y,UR(Z)) ) \circ (\id_{R(X)\otimes R(Y)} \otimes \eta^r_{R(Z)}) \\
    & \stackrel{(\heartsuit)}{=}&
    R_2(X\otimes Y,UR(Z)) \circ (R_2(X,Y)\otimes \id_{RUR(Z)})  \circ (\id_{R(X)\otimes R(Y)} \otimes \eta^r_{R(Z)}) \\
    & \stackrel{(N)}{=} & 
    R_2(X\otimes Y,UR(Z)) \circ (\id_{R(X\otimes Y)} \otimes \eta^r_{R(Z)}) \circ (R_2(X,Y)\otimes \id_{R(Z)}) \\
    & \stackrel{(\textnormal{\ref{eq:cohopfadj}})}{=} &
    h^l_{X\otimes Y,R(Z)} \circ (R_2(X,Y)\otimes \id_{R(Z)}).
\end{array}
\]}
Here, the equality $(\heartsuit)$ holds because $R$ is a monoidal functor.
\end{proof}

\begin{proposition}\label{prop:R1FrobinZC}
Suppose that Condition~(\ref{cond:main}) is satisfied and $R(\unit)$ is a Frobenius algebra in $\C$ with comultiplication $\Delta$ and counit $\nu$. Then, 
$((R(\unit),\sigma),\, R_2(\unit,\unit),\, R_0,$ $\Delta,$ $\nu)$
is a Frobenius algebra in $\Z(\C)$. Here $\sigma_X=(h^l_{\unit,X})^{-1} h^r_{X,\unit}$ is the half-braiding of the object $R(\unit)$.
\end{proposition}
\begin{proof}
By assumption, $U\dashv R$ is a coHopf adjunction. Thus, by applying \cite[Corollary~6.7]{bruguieres2011hopf} to the Hopf adjunction $R^{\op}\dashv U^{\op}$, we get that $((R(\unit),\sigma),R_2(\unit,\unit),R_0)$ is an algebra in $\Z(\C)$ where $\sigma_X=(h^l_{\unit,X})^{-1}  h^r_{X,\unit}$. 

Also, by Theorem~\ref{thm:frobcoHopf}, $R\dashv U$ is a Hopf adjunction. Thus, by \cite[Corollary~6.7]{bruguieres2011hopf}, we have that $((R(\unit),\rho),R^2(\unit,\unit),R^0)$ is a coalgebra in $\Z(\C)$ where $\rho_X= H^l_{\unit,X} (H^r_{X,\unit})^{-1}$. By \cite[Remark~4.3(3)]{balan2017hopf}, $H^r_{X,Y}=(h^r_{X,Y})^{-1}$ and $H^l_{X,Y} = (h^l_{X,Y})^{-1}$ for all $X,Y\in \C$. Thus, it is clear that $\sigma_X=\rho_X$ for all $X\in \C$.

By Theorem~\ref{thm:frobcoHopf}, we know that $R^0=\nu$. Next we will show that $R^2(\unit,\unit)=\Delta$.
By (\ref{eq:R^2}), $R^2(\unit,\unit) \circ R(\varepsilon^r_X) = (h^l_{\unit,R(\unit)})^{-1}\circ R(\eta^l_X \circ \varepsilon^r_X)$. To show that $R^2(\unit,\unit)=\Delta$, it suffices to show that $h^l_{\unit,R(\unit)}\circ \Delta \circ R(\varepsilon^r_X) = R(\eta^l_X \circ \varepsilon^r_X)$ because $R(\varepsilon^r_X)$ is epic. Set $m=R_2(\unit,\unit)$, $u=R_0$ and consider the following commutative diagram.
{\small
\[\begin{tikzcd}[column sep=1.9em]
    {RUR(\unit)} && {R(\unit) R(\unit)} && {R(\unit) R(\unit)} & {R(\unit)} \\
	{RU(R(\unit) R(\unit))} && {R(\unit) R(\unit) R(\unit)} && {R(\unit) R(\unit)} \\
	\\
	{RU(R(\unit) R(\unit) R(\unit))\;} && {\; R(\unit) R(\unit) R(\unit) R(\unit)} && {R(\unit) R(\unit) R(\unit)} & {R(\unit) R(\unit)} \\
	\\
	{RU(R(\unit) RUR(\unit))} && {RU(R(\unit) R(\unit))} && {RUR(\unit) R(\unit)} & {R(\unit) R(\unit)} \\
	\\
	{RU(R(\unit) RUR(\unit))} && {R(UR(\unit) URUR(\unit))} && {R(UR(\unit) UR(\unit))} & {RUR(\unit)} \\
	{RURURUR(\unit)} && {RURUR(\unit)} &&& {RUR(\unit)}
	\arrow["{RU(u\cdot\id)}"', from=1-1, to=2-1]
	\arrow["{RU(\Delta\cdot\id)}"', from=2-1, to=4-1]
	\arrow["{RU(\id\cdot h^l_{\unit,R(\unit)})}"', from=4-1, to=6-1]
	\arrow["{R(\varepsilon^r_{URUR(\unit)})}"', from=9-1, to=9-3]
	\arrow["{RUR(\varepsilon^r_{\unit})}"', from=9-3, to=9-6]
	\arrow["{R(\varepsilon^r_{\unit})}"{description}, shift left=1, curve={height=-30pt}, from=1-1, to=1-6]
	\arrow["{(h^l_{\unit,R(\unit)})^{-1}}", from=1-1, to=1-3]
	\arrow["\Delta", from=1-6, to=4-6]
	\arrow["{(h^l_{\unit,R(\unit)R(\unit)})^{-1}}"', from=2-1, to=2-3]
	\arrow["{(h^l_{\unit,R(\unit)R(\unit)R(\unit)})^{-1}}", from=4-1, to=4-3]
	\arrow["\id\cdot\Delta\cdot\id", from=2-3, to=4-3]
	\arrow["{\id \cdot u\cdot \id}", from=1-3, to=2-3]
	\arrow[""{name=0, anchor=center, inner sep=0}, "{RU(\id\cdot m)}", from=4-1, to=6-3]
	\arrow["{RU(\id \cdot R(\varepsilon^r_{\unit}))}"', from=6-1, to=6-3]
	\arrow[Rightarrow, no head, from=1-3, to=1-5]
	\arrow["m", from=1-5, to=1-6]
	\arrow["{h^l_{\unit,R(\unit)}\cdot\id}"{description}, from=4-5, to=6-5]
	\arrow["{h^l_{UR(\unit),R(\unit)}}"{description}, from=6-5, to=8-5]
	\arrow["{R(\varepsilon^r_{\unit}\cdot\id)}", from=8-3, to=9-3]
	\arrow["{R(\id\cdot UR(\varepsilon^r_{\unit}))}", from=8-3, to=8-5]
	\arrow["{R(U_2^{-1})}"{description}, from=6-3, to=8-5]
	\arrow["{(h^l_{\unit,R(\unit)R(\unit)})^{-1}}"{description}, from=6-3, to=4-5]
	\arrow["{\id\cdot \id\cdot m}", from=4-3, to=4-5]
	\arrow["{m\cdot \id}", from=4-5, to=4-6]
	\arrow[Rightarrow, no head, from=4-6, to=6-6]
	\arrow["{R(\varepsilon^r_{\unit})\cdot\id}"', from=6-5, to=6-6]
	\arrow["{\id\cdot \Delta}", from=2-5, to=4-5]
	\arrow["{\id\cdot m}"', from=2-3, to=2-5]
	\arrow[Rightarrow, no head, from=1-5, to=2-5]
	\arrow["{RU(h^l_{\unit,RUR(\unit)})}"', from=8-1, to=9-1]
	\arrow[""{name=1, anchor=center, inner sep=0}, "{R(U_2^{-1})}", from=8-1, to=8-3]
	\arrow[Rightarrow, no head, from=6-1, to=8-1]
	\arrow["{h^l_{\unit,R(\unit)}}"{description}, from=6-6, to=8-6]
	\arrow["{R(\varepsilon^r_{\unit}\cdot\id)}", from=8-5, to=8-6]
	\arrow[Rightarrow, no head, from=8-6, to=9-6]
	\arrow["{(\ref{lem:operatorCoHopf}(c))}"{description}, curve={height=-18pt}, draw=none, from=1-1, to=1-6]
	\arrow["{(N)}"{description}, draw=none, from=1-1, to=2-3]
	\arrow["{(\diamondsuit)}"{description}, draw=none, from=1-3, to=2-5]
	\arrow["{(\heartsuit)}"{description}, draw=none, from=1-5, to=4-6]
	\arrow["{(\heartsuit)}"{description}, draw=none, from=2-3, to=4-5]
	\arrow["{(N)}"{description}, draw=none, from=2-1, to=4-3]
	\arrow["{(N)}"{description}, draw=none, from=4-3, to=6-3]
	\arrow["{(\ref{lem:operatorCoHopf}(c))}"{description}, draw=none, from=4-5, to=6-6]
	\arrow["{(N)}"{description}, draw=none, from=6-5, to=8-6]
	\arrow["{(N)}"{description}, draw=none, from=8-3, to=9-6]
	\arrow["{(\ref{lem:operatorCoHopf}(b))}"{description}, draw=none, from=8-1, to=9-3]
	\arrow["{(\ref{lem:operatorCoHopf}(d))}"{description}, draw=none, from=4-5, to=8-3]
	\arrow["{(\ref{lem:operatorCoHopf}(c))}"{description}, draw=none, from=6-1, to=0]
	\arrow["{(N)}"{description}, draw=none, from=6-3, to=1]
\end{tikzcd}\]
}
Here, the squares marked $(\heartsuit)$ commute because $m$ and $\Delta$ satisfy the Frobenius relation. The square marked $(\diamondsuit)$ is the unitality relation of the algebra $(R(\unit),m,u)$.
Using (\ref{eq:etaL}), the bottom path of this above diagram equals $R(\eta^l_X \circ \varepsilon^r_X)$. Also, the top path reads $h^l_{\unit,R(\unit)}\circ \Delta \circ R(\varepsilon^r_X)$. Thus, we get that $R^2(\unit,\unit)=\Delta$.

Finally, as $R_2(\unit,\unit)$ and $\Delta =R_2(\unit,\unit)$ satisfy the Frobenius algebra axiom, we have that $((R(\unit),\sigma),R_2(\unit,\unit),R_0,\Delta,\nu)$ is a Frobenius algebra in $\Z(\C)$.
\end{proof}


\subsection{Separable and special Frobenius case}\label{subsec:3.spesep}
Now consider the following result.

\begin{theorem}\label{thm:speFrobcoHopf}
Assume that Condition~\ref{cond:main} is satisfied. Then, $(R(\unit),m,u,\allowbreak \Delta,\nu)$ is a separable (resp., special) Frobenius algebra in $\C$ if and only if $R$ is a separable (resp., special) Frobenius monoidal functor.
\end{theorem}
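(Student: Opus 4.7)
The plan is to prove the two directions of the biconditional separately, leveraging Theorem~\ref{thm:frobcoHopf} to reduce to checking only the separable/special axioms rather than reconstructing the Frobenius structure from scratch.

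For the forward implication, I would apply Lemma~\ref{lem:mon-preserve}(b) to the trivial Frobenius algebra $\unit_{\D}$, which is automatically separable and special. Hence if $R$ is separable (resp.\ special) Frobenius monoidal, then $R(\unit_{\D})$ inherits these properties as an algebra in $\C$.

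For the reverse implication, assume $R(\unit)$ is a separable (resp.\ special) Frobenius algebra with constant $\beta_A \in \kk^{\times}$ (resp.\ $\beta_{\unit} \in \kk^{\times}$). By Theorem~\ref{thm:frobcoHopf}, $R$ is Frobenius monoidal with $R^2(X,Y) = (h^l_{X,R(Y)})^{-1} \circ R(\id_X \otimes \eta^l_Y)$ and $R^0 = \nu$. Combining this with Lemma~\ref{lem:operatorCoHopf}(c) (which gives $R_2(X,Y) = R(\id_X \otimes \varepsilon^r_Y) \circ h^l_{X,R(Y)}$), the coHopf operators telescope and I obtain the key identity
\begin{equation}\label{eq:sketch-key}
R_2(X,Y) \circ R^2(X,Y) \;=\; R(\id_X \otimes \psi_Y), \qquad \psi_Y := \varepsilon^r_Y \circ \eta^l_Y.
\end{equation}
Specializing to $X=\unit$ reduces the separable axiom to showing $R_2(\unit,Y)\circ R^2(\unit,Y) = \beta_A\, \id_{R(Y)}$.

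To establish this, I would view $R(Y)$ as a left $R(\unit)$-module via the action $R_2(\unit,Y)$. Applying the Frobenius axiom of $R$ to the triple $(\unit,\unit,Y)$, together with the unit law $R_2(\unit,Y)\circ(R_0\otimes\id_{R(Y)})=\id_{R(Y)}$, yields
\[
R^2(\unit,Y) \;=\; (\id_{R(\unit)} \otimes R_2(\unit,Y)) \circ (\Delta u \otimes \id_{R(Y)}),
\]
identifying $R^2(\unit,Y)$ as the coaction induced on the $R(\unit)$-module $R(Y)$ from the Frobenius coalgebra structure on $R(\unit)$. Composing with $R_2(\unit,Y)$, invoking associativity of the module action, and using $m\Delta u = \beta_A u$ then gives the claim.

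Combining with (\ref{eq:sketch-key}) at $X=\unit$, I get $R(\psi_Y)=\beta_A\,\id_{R(Y)} = R(\beta_A\,\id_Y)$, and faithfulness of $R$ (Condition~\ref{cond:main}(c)) forces $\psi_Y = \beta_A\,\id_Y$ for every $Y\in\D$. Substituting back into (\ref{eq:sketch-key}) yields $R_2(X,Y)\circ R^2(X,Y) = \beta_A\,\id_{R(X\otimes Y)}$, so $R$ is separable Frobenius monoidal with $\beta_2 = \beta_A$. The special case is then immediate: since $R^0 = \nu$ and $R_0 = u$, the special algebra condition $\nu u = \beta_{\unit}\,\id_{\unit}$ directly gives $R^0\circ R_0 = \beta_{\unit}\,\id_{\unit}$. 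The main obstacle is the module-theoretic identification of $R^2(\unit,Y)$ as the induced coaction, which needs a careful unfolding of the Frobenius axiom; once that is in hand, the argument is propagated from $X=\unit$ to arbitrary $X$ essentially for free, via faithfulness of $R$.
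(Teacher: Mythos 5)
Your argument is correct, and for the hard direction it takes a genuinely different route from the paper's. Both proofs begin from the same key identity $R_2(X,Y)\circ R^2(X,Y)=R(\id_X\otimes \varepsilon^r_Y\eta^l_Y)$ (you reach it by telescoping $R^2$ from Theorem~\ref{thm:frobcoHopf}(a) against Lemma~\ref{lem:operatorCoHopf}(c); the paper gets the same identity via a diagram chase using Lemma~\ref{lem:operatorCoHopf}(b)). From there the paper unwinds the explicit formula for $\eta^l_Y$ given in Theorem~\ref{thm:frobcoHopf}(b) and shows $\varepsilon^r_Y\eta^l_Y\varepsilon^r_Y=\beta_A\,\varepsilon^r_Y$ through a long computation invoking the coequalizer property of $\varepsilon^r$ and Lemma~\ref{lem:operatorCoHopf}(a),(c), then cancels the epimorphism $\varepsilon^r_Y$. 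You instead never touch the formula for $\eta^l$: you use the Frobenius axiom of $R$ at $(\unit,\unit,Y)$ plus the unit law to write $R^2(\unit,Y)=(\id\otimes R_2(\unit,Y))\circ(R^2(\unit,\unit)R_0\otimes\id)$, deduce $R_2(\unit,Y)R^2(\unit,Y)=\beta_A\,\id$ from associativity and $m\Delta=\beta_A\,\id$, and then propagate to all $X$ via faithfulness of $R$ (equivalent, by Lemma~\ref{lem:premonadic}, to the epi-ness the paper uses). This is shorter and more conceptual, at the cost of one step you should make explicit: your formula requires $R^2(\unit,\unit)\circ R_0=\Delta u$ for the \emph{given} comultiplication $\Delta$, whereas Theorem~\ref{thm:frobcoHopf} only tells you $R^0=\nu$ and gives $R^2$ by an indirect formula. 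This is easily repaired: by Lemma~\ref{lem:mon-preserve}(a), $(R(\unit),m,u,R^2(\unit,\unit),R^0)$ is a Frobenius algebra with the same counit $\nu$, and the comultiplication of a Frobenius algebra is uniquely determined by its multiplication, unit and counit (the copairing dual to the nondegenerate pairing $\nu m$ is unique), so $R^2(\unit,\unit)=\Delta$. With that sentence added, your proof is complete; the remaining pieces (the easy direction via Lemma~\ref{lem:mon-preserve}(b) applied to $\unit_{\D}$, and the special case via $R^0R_0=\nu u$) coincide with the paper's.
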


\begin{proof}
($\Rightarrow$): Suppose that $R(\unit)$ is a separable Frobenius algebra. To start, consider the following commutative diagram. 
{\small
\begin{equation*}
\begin{gathered}
\xymatrix@R=3.5pc@C=2.2pc{
    & & R(X)\cdot R(Y) \ar[r]^-{\eta^r_{R(X)\cdot R(Y)}} \ar@{}[d]|-(.5){(N)}  & RU(R(X) \cdot R(Y)) \ar[d]^{R(U_2^{-1})}    
    \\
    R(X\cdot Y) \ar[r]^-{R(\id\cdot\eta^l_Y)} & 
    R(X\cdot UR(Y)) \ar[ru]^{(h^l_{X,R(Y)})^{-1}} \ar[r]^-{\eta^r_{X\cdot UR(Y)}} \ar[rd]_{\id_{R(X\cdot UR(Y))}} & 
    RUR(X\cdot UR(Y)) \ar[d]|-{R(\varepsilon^r_{R(X\cdot UR(Y))})} \ar@{}[ld]|-(.25){(\ref{eq:snake})} \ar[ru]|-{RU((h^l_{X,R(Y)})^{-1})} \ar@{}[r]|-(.5){(\ref{lem:operatorCoHopf}(b))} & 
    R(UR(X)\cdot UR(Y))  \ar[ld]|-{R(\varepsilon^r_X\cdot \id_{UR(Y)})} \ar[d]^{R(\varepsilon^r_X \cdot \varepsilon^r_Y)} 
    \\
    & & R(X\cdot UR(Y)) \ar[r]_-{R(\id_X\cdot \varepsilon^r_Y)} & R(X\cdot Y) \ar@{}[ul]|-(.25){(N)}   
}
\end{gathered}
\end{equation*}
}
Using Theorems~\ref{thm:docadj}(a) and \ref{thm:frobcoHopf}(a) we get that the compositions along the top of the diagram is equal to  $R_2(X,Y)\; R^2(X,Y)$. Thus, we get that
\[R_2(X,Y)\; R^2(X,Y) = R(\id_X\otimes \varepsilon^r_Y\; \eta^l_Y ).\] 
Now, observe that $\varepsilon^r_Y\;\eta^l_Y\;\varepsilon^r_Y $ is equal to
\begin{align*}
    \stackrel{(\textnormal{\ref{thm:frobcoHopf}}(b))}{=} \;&
    \varepsilon^r_Y \circ UR(\varepsilon^r_Y) \circ \varepsilon^r_{URUR(Y)} \circ U(h^l_{\unit,RUR(Y)}) \circ U(\id_{R(\unit)} \cdot h^l_{\unit,R(Y)}) \circ U(\Delta  u \cdot\id_{R(Y)} )  
    \\[-0.07cm]
    \stackrel{(\textnormal{\ref{eq:coequalizer}})}{=} \;\;\;&
    \varepsilon^r_Y \circ \varepsilon^r_{UR(Y)} \circ \varepsilon^r_{URUR(Y)} \circ U(h^l_{\unit,RUR(Y)}) \circ U(\id_{R(\unit)} \cdot h^l_{\unit,R(Y)}) \circ U(\Delta  u \cdot\id_{R(Y)} )  
    \\[-0.07cm]
    \stackrel{(\textnormal{\ref{eq:coequalizer}})}{=} \;\;\;&
    \varepsilon^r_Y \circ \varepsilon^r_{UR(Y)} \circ UR(\varepsilon^r_{UR(Y)}) \circ U(h^l_{\unit,RUR(Y)}) \circ U(\id_{R(\unit)} \cdot h^l_{\unit,R(Y)}) \circ U(\Delta u \cdot\id_{R(Y)} )  
    \\[-0.07cm]
    \stackrel{(\textnormal{\ref{eq:coequalizer}})}{=} \;\;\;&
    \varepsilon^r_Y \circ UR(\varepsilon^r_Y) \circ UR(\varepsilon^r_{UR(Y)}) \circ U(h^l_{\unit,RUR(Y)}) \circ U(\id_{R(\unit)} \cdot h^l_{\unit,R(Y)}) \circ U(\Delta  u \cdot\id_{R(Y)} )  
    \\
    \stackrel{}{=} \;\;\;\;\,& 
    \varepsilon^r_Y \circ UR(\varepsilon^r_Y) \circ U[R(\id_{\unit}\cdot\varepsilon^r_{UR(Y)})(h^l_{\unit,RUR(Y)})] \circ U(\id_{R(\unit)} \cdot h^l_{\unit,R(Y)}) \circ U(\Delta  u \cdot\id_{R(Y)} )  
    \\[-0.07cm]
    \stackrel{(\textnormal{\ref{lem:operatorCoHopf}}(c))}{=} \;&
    \varepsilon^r_Y \circ UR(\varepsilon^r_Y) \circ U(R_2(\unit,UR(Y))) \circ U(\id_{R(\unit)} \cdot h^l_{\unit,R(Y)}) \circ U(\Delta  u \cdot\id_{R(Y)} )  
    \\[-0.07cm]
    \stackrel{(\textnormal{\ref{lem:operatorCoHopf}}(a))}{=}\; &
    \varepsilon^r_Y \circ UR(\varepsilon^r_Y) \circ U(h^l_{\unit,R(Y)}) \circ U(R_2(\unit,\unit)\cdot \id_{R(Y)}) \circ U(\Delta  u \cdot\id_{R(Y)} )  
    \\[-0.07cm]
    \stackrel{(\textnormal{\ref{lem:operatorCoHopf}}(c))}{=}\; &
    \varepsilon^r_Y \circ U(R_2(\unit,Y)) \circ U(R_2(\unit,\unit)\cdot \id_{R(Y)}) \circ U(\Delta  u \cdot\id_{R(Y)} )  
    \\
    \stackrel{}{=} \;\;\;\;\,& 
    \varepsilon^r_Y \circ U(R_2(\unit,Y)) \circ U((R_2(\unit,\unit)\circ \Delta)\cdot \id_{R(Y)}) \circ U(u \cdot \id_{R(Y)}) 
    \\[-0.07cm]
    \stackrel{(\diamondsuit)}{=} \;\;\;\, &
    \varepsilon^r_Y \circ U(R_2(\unit,Y))\circ U(u \cdot \id_{R(Y)}) 
    \\[-0.07cm]
    \stackrel{u=R_0}{=} \;\;& 
    \varepsilon^r_Y \circ U(R_2(\unit,Y)\circ (R_0\cdot \id_{R(Y)}))\hspace{0.65cm} \stackrel{(\spadesuit)}{=} \hspace{0.65cm}\varepsilon^r_Y    .
\end{align*}
Here, the equality $(\diamondsuit)$ holds because $R(\unit)$ is separable and the equality $(\spadesuit)$ holds because $R$ is a monoidal functor.
By Lemma~\ref{lem:premonadic}, $\varepsilon^r_Y$ is an coequalizer, and therefore, it is epic. Hence, we get that $\varepsilon^r_Y \; \eta^l_Y = \id_Y$, thereby proving that $R_2(X,Y)\; R^2(X,Y) = \id_{R(X\otimes Y)}$. Hence, $R$ is separable Frobenius.

Lastly, observe that $R^0\;R_0 = \nu\; u = \id_{\unit}$ is equal to the identity map on $\unit$ if and only if $R(\unit)$ is special Frobenius. Hence, this direction of the proof is finished.

\noindent
($\Leftarrow$): As the unit object is separable (resp., special) Frobenius, by Lemma~\ref{lem:mon-preserve}(b), it follows that $R(\unit)$ is separable (resp., special) Frobenius.
\end{proof}


\subsection{Pivotal case}\label{subsec:3.pivotal}
Recall the definitions of pivotal categories and functors from Section~\ref{subsec:duality}. Now, consider the following result.

\begin{proposition}\label{prop:piv}
Let $\C$ be a monoidal category.
\begin{enumerate}
    \item[\upshape(a)] Suppose that $((A,\sigma),m,u,\Delta,\nu)$ is a Frobenius algebra in $\Z(\C)$, then $F=A\otimes - :\C\rightarrow\C$ is a Frobenius monoidal functor with structure maps:
    {\small
    \begin{equation}
    \begin{split}
        F_2(X,Y):=(m\otimes \id_X\otimes \id_Y) (\id_{A}\otimes \sigma_X \otimes \id_Y)  & :A\otimes X \otimes A \otimes Y\rightarrow A \otimes X\otimes Y \\
        F^2(X,Y):= (\id_{A} \otimes \sigma_X^{-1} \otimes \id_Y )  (\Delta\otimes\id_X \otimes\id_Y) & : A \otimes X\otimes Y \rightarrow A\otimes X \otimes A \otimes Y\\
        F_0:= u:\unit\rightarrow A\hspace{2cm} & \hspace{1cm} F^0:= \nu: A \rightarrow \unit
    \end{split}
    \end{equation}
    }
    \item[\upshape(b)] If $\C$ is a pivotal category and $((A,\sigma),m,u,\Delta, \nu)$ is a symmetric Frobenius algebra in $\Z(\C)$, then $F = A\otimes - :\C\rightarrow\C$ is a pivotal functor with the above structure maps.
\end{enumerate}
\end{proposition}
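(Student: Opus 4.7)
For part (a), the strategy is direct verification of each of the five required axioms (monoidal associativity and unitality, comonoidal coassociativity and counitality, and the two Frobenius compatibilities) by string-diagram calculation. Each axiom reduces, after expanding the structure maps, to an identity for $(A,m,u,\Delta,\nu)$---namely associativity of $m$, counitality of $\nu$, one of the Frobenius relations, and so on---combined with either the half-braiding axiom $\beta_{X \otimes Y} = (\beta_X \otimes \id_Y)(\id_X \otimes \beta_Y)$ or a naturality square for $\beta$. These verifications are routine but tedious.

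For part (b), I would first reduce the pivotal functor condition (\ref{eq:pivotalFunctor}) to a concrete claim about $\xi^F_X$. Since $\fp^{\C}$ is multiplicative, $\fp^{\C}_{F(X)} = \fp^{\C}_A \otimes \fp^{\C}_X$ and $F(\fp^{\C}_X) = \id_A \otimes \fp^{\C}_X$, so after cancelling the invertible factor $\id_A \otimes \fp^{\C}_X$ the axiom becomes
\[\xi^F_X \;=\; \fp^{\C}_A \otimes \id_{\lv\lv X}.\]
Thus the task is to compute $\xi^F_X$ explicitly and match it against this expression.

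Next, using (\ref{eq:zeta}) together with $F^0 = \nu$ and the formula for $F_2$ from part (a), one obtains
\[\overline{\ev}_{F(X)} \;=\; \bigl((\nu\, m)\otimes \ev_X\bigr) \circ (\id_A \otimes \beta_{\lv X}\otimes \id_X),\]
and a dual formula for $\overline{\coev}_{F(X)}$ built from $\Delta\, u$, $\coev_X$ and $\beta^{-1}$. Combining these with the standard decomposition $\coev_{F(X)} = (\id_A \otimes \coev_X \otimes \id_{\lv A}) \circ \coev_A$ of the coevaluation across a tensor product yields an explicit tangle for $\zeta^F_X : A \otimes \lv X \to \lv X \otimes \lv A$, and, after taking one further left dual and substituting into (\ref{eq:xi}), an explicit tangle for $\xi^F_X$.

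The final and most delicate step is to collapse this tangle to $\fp^{\C}_A \otimes \id_{\lv\lv X}$. This is where the symmetric Frobenius hypothesis enters: Lemma~\ref{lem:symFrobalternate} permits swapping an $A$-loop closed by $\nu\, m$ on one side for an $A$-loop closed by $\Delta\, u$ on the other side, at the cost of inserting $\fp^{\C}_A$. Once this swap is performed, the half-braidings $\beta$ and $\beta^{-1}$ cancel by naturality and invertibility, the $\coev_A$, $\ev_{\lv A}$ loops collapse via the snake relations, and the remaining $X$-strand is straightened out using the rigidity of $\C$. I expect this final string-diagram simplification to be the main obstacle, since tracking the interaction of the half-braiding on $\lv X$ versus $\lv\lv X$ and ensuring that the symmetric condition is applied at precisely the right spot requires care.
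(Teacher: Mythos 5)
Your proposal is correct and follows essentially the same route as the paper: part (a) by routine direct verification, and part (b) by reducing the pivotality axiom to $\xi^{A\otimes-}_X = \fp_A \otimes \id_{\lv\lv X}$, computing $\zeta^{A\otimes-}_X$ and its inverse explicitly via the decomposition $\coev_{A\otimes X} = (\id_A \otimes \coev_X \otimes \id_{\lv A})\coev_A$, and then collapsing the resulting tangle using Lemma~\ref{lem:symFrobalternate} together with naturality of $\beta$ and the snake relations. The paper organizes the final simplification by isolating a morphism $\kappa_A : A \to \lv\lv A$ with $\xi^{A\otimes-}_X = \kappa_A \otimes \id_{\lv\lv X}$ and showing $\kappa_A = \fp_A$, but this is the same calculation you describe.
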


\begin{proof}
The proof of (a) is straightforward, so we will only prove (b). 
Let $\fp$ denote the pivotal structure of $\C$. By the definition of a pivotal functor (Definition~\ref{defn:pivotalFunctor}), we need to prove that 
\begin{equation*}
    \xi^{A\otimes - }_X  \circ (\id_{A}\otimes \fp_X) = \fp_{A\otimes X}:A\otimes \lv\lv X \rightarrow \lv\lv(A\otimes X) = \lv\lv A \otimes \lv\lv X.
\end{equation*}  
where $\xi^{A\otimes - }_X 
\stackrel{(\textnormal{\ref{eq:xi}})}{=}  
\lv((\zeta^{A\otimes-}_X)^{-1}) \circ \zeta^{A\otimes-}_{\lv X}$. 
We first calculate that
\[
\begin{array}{rcl}
\zeta^{A\otimes-}_X 
& \stackrel{(\textnormal{\ref{eq:zeta}})}{=} 
& ([F^0\circ F(\ev_X) \circ F_2(\lv X, X)]  \otimes \id_{\lv F(X)})
\;(\id_{F(\lv X)}\otimes \coev_{F(X)}) 
\\
& \stackrel{(\spadesuit)}{=} &
(\nu\;m \otimes \ev_X \otimes \id_{\lv(A\otimes X)}  )(\id_A \otimes \sigma_{\lv X} \otimes \id_{X\otimes \lv(A\otimes X)})  (\id_{A\otimes \lv X}\otimes \coev_{A\otimes X})
\\
& \stackrel{(\diamondsuit)}{=} & ( \nu\; m\otimes \id_{\lv X \otimes \lv A }) (\id_A\otimes \sigma_{\lv X} \otimes \id_{\lv A}) (\id_{A\otimes \lv X}\otimes \coev_A)    .
\end{array}    
\]

\hspace{-0.1cm}Here, the equality $(\spadesuit)$ is obtained by plugging in the description of $F^2,F_2,F^0,F_0$ from part (a), and the equality $(\diamondsuit)$ using that $\coev_{A\cdot X} = (\id_A \otimes \coev_X \otimes \id_{\lv A})\coev_A$ and the snake relation. A similar calculation shows that 
\[ (\zeta^{A\otimes -}_X)^{-1} = \sigma_{\lv X}\; (\id_{\lv X} \otimes \ev_A \otimes \id_A)  (\id_{\lv X\cdot \lv A} \otimes u\; \Delta ) .\]

Using the descriptions of $\zeta^{A\otimes-}_X$ and $(\zeta^{A\otimes -}_X)^{-1} $ above, we get an expression for $\xi^{A\otimes - }_X $. Using the naturality of $\sigma$ and the snake relation one can simplify further to get that $\xi^{A\otimes - }_X =\kappa_A \otimes \id_{\lv\lv X}$, where $\kappa_A:A\rightarrow \lv\lv A$ is the following morphism
{\small
\[
\arraycolsep=1.4pt
\begin{array}{cl}
    = & (\nu m \otimes \id_{\lv\lv A}) (\id_A\otimes \ev_A\otimes\id_A\otimes \id_{\lv\lv A}) (\id_A\otimes \id_{\lv A} \otimes \Delta u \otimes \id_{\lv\lv A}) (\id_A\otimes \coev_{\lv A}) 
    \\

    \stackrel{(\textnormal{\ref{lem:symFrobalternate}})}{=} & (\nu m \otimes \id_{\lv\lv A}) (\id_A \otimes \id_A \otimes  \ev_{\lv A}(\fp_A\otimes \id_{\lv A}) \otimes \id_{\lv\lv A})  (\id_A\otimes \Delta  u \otimes \coev_{\lv A}) 
    \\
    \stackrel{(N)}{=} & (\ev_{\lv A} \otimes \id_{\lv\lv A}) (\id_{\lv\lv A}\otimes \coev_{\lv A})\fp_A (\nu  m\otimes \id_A)(\id_A\otimes \Delta  u) 
    \\
    \stackrel{(\heartsuit)}{=} & (\ev_{\lv A} \otimes \id_{\lv\lv A}) (\id_{\lv\lv A}\otimes \coev_{\lv A})\fp_{ A}  \hspace{0.5cm} = \;\; \fp_{ A} . 
\end{array}
\]   
}
Here, the equality $(\heartsuit)$ holds because $A$ is a Frobenius algebra. Finally,
\begin{equation*}
    \xi^{A\otimes - }_X  \circ (\id_{A}\otimes \fp_X) = (\fp_A\otimes\id_{\lv\lv X}) (\id_A\otimes \fp_X) = (\fp_A\otimes \fp_X) \stackrel{}{=} \fp_{A\otimes X}.
\end{equation*}
Hence, the proof is finished.
\end{proof}

Now, we are ready to prove the main result of this section.

\begin{theorem}\label{thm:pivotalcoHopf}
Suppose that Condition~\ref{cond:main} is satisfied with $U:\C \rightarrow \D$  a pivotal functor (thus, $\C$ is pivotal). Then, the algebra $(R(\unit),R_2(\unit,\unit),R_0)$ is a symmetric Frobenius algebra in $\C$ if and only if $R$ is a pivotal Frobenius monoidal functor.
\end{theorem}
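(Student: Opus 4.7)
The plan is to handle the backward direction by a one-line appeal to Lemma~\ref{lem:mon-preserve}(c) with the symmetric Frobenius algebra $A=\unit_{\D}$. The real content is in the forward direction, where I would take the following conceptual route: realize the composition $RU:\C\to\C$ (up to canonical isomorphism) as the left-tensoring functor $R(\unit)\otimes-$, apply Proposition~\ref{prop:piv}(b) to deduce that $RU$ is a pivotal Frobenius monoidal functor, and then descend pivotality from $RU$ back to $R$ by exploiting the faithfulness of $R$.

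First I would equip $R(\unit)$ with a half-braiding
\[
\beta_X:=(h^r_{X,\unit})^{-1}\circ h^l_{\unit,X}:R(\unit)\otimes X\longrightarrow X\otimes R(\unit),
\]
well-defined since both coHopf operators are invertible by Condition~\ref{cond:main}(a). A diagram chase in the spirit of Lemma~\ref{lem:operatorCoHopf} verifies the half-braiding axioms. Since $R(\unit)$ is symmetric Frobenius by assumption, Proposition~\ref{prop:piv}(b) then endows $R(\unit)\otimes-:\C\to\C$ with the structure of a pivotal Frobenius monoidal functor. Next, I would check that the coHopf operator $h^l_{\unit,-}$ defines an isomorphism of Frobenius monoidal functors $R(\unit)\otimes-\xrightarrow{\;\sim\;}RU$, using the concrete formula $R^2(X,Y)=(h^l_{X,R(Y)})^{-1}\circ R(\id_X\otimes\eta^l_Y)$ from Theorem~\ref{thm:frobcoHopf}(a) together with the defining relation of $\beta$. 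Pivotality is preserved under such isomorphisms, so $RU$ inherits the structure of a pivotal Frobenius monoidal functor.

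Given that $U$ is pivotal and $RU$ is pivotal, Lemma~\ref{lem:pivShimizuA} yields, for every $Z\in\C$,
\[
\fp^{\C}_{RU(Z)}\;=\;\xi^{RU}_Z\circ RU(\fp^{\C}_Z)\;=\;\xi^R_{U(Z)}\circ R(\xi^U_Z)\circ R\!\left((\xi^U_Z)^{-1}\circ\fp^{\D}_{U(Z)}\right)\;=\;\xi^R_{U(Z)}\circ R(\fp^{\D}_{U(Z)}),
\]
so the pivotality identity \eqref{eq:pivotalFunctor} for $R$ holds on objects in the essential image of $U$. To extend this to arbitrary $X\in\D$, I would invoke the faithfulness of $R$: by Lemma~\ref{lem:premonadic}, $\varepsilon^r_X:UR(X)\to X$ is epic in $\D$, and then $R(\varepsilon^r_X)$ is split epi in $\C$ by the triangle identity \eqref{eq:snake}. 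Both sides of \eqref{eq:pivotalFunctor} are natural transformations in $X$; combining the identity already established at $UR(X)$ (which lies in the image of $U$) with the naturality square of $\xi^R$ along $\varepsilon^r_X$ shows that the two sides of \eqref{eq:pivotalFunctor} agree after precomposition with $R(\varepsilon^r_X)$, and cancelling this epimorphism closes the argument.

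The hard part will be verifying that $h^l_{\unit,-}$ is simultaneously compatible with the monoidal and the comonoidal constraints of the two Frobenius monoidal functors involved---a bookkeeping exercise with Lemma~\ref{lem:operatorCoHopf} and the explicit form of $R_2,R^2$ from Theorem~\ref{thm:frobcoHopf}(a). Note that the symmetric Frobenius hypothesis on $R(\unit)$ enters only through Proposition~\ref{prop:piv}(b); the identification $RU\cong R(\unit)\otimes-$ uses solely the coHopf structure of the adjunction.
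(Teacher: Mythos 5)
Your proposal is correct and follows essentially the same route as the paper: equip $R(\unit)$ with the half-braiding built from $(h^l_{\unit,-})^{-1}$ and $h^r_{-,\unit}$ (the paper cites \cite[Proposition~6.1]{bruguieres2011exact} for this rather than reproving it), apply Proposition~\ref{prop:piv}(b) to make $R(\unit)\otimes-$ pivotal, transport along the isomorphism $h^l_{\unit,-}$ to $RU$, and then descend to $R$ via pivotality of $U$, Lemma~\ref{lem:pivShimizuA}, naturality along $\varepsilon^r$, and cancellation of the epimorphism $R(\varepsilon^r_X)$. The only nit is that your $\beta_X$ is oriented $R(\unit)\otimes X\to X\otimes R(\unit)$, whereas the paper's half-braiding convention (and the formulas in Proposition~\ref{prop:piv}) require the map $X\otimes R(\unit)\to R(\unit)\otimes X$, i.e.\ the inverse composite $(h^l_{\unit,X})^{-1}\circ h^r_{X,\unit}$.
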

\begin{proof}
We will prove the forward direction; the converse follows by Lemma~\ref{lem:mon-preserve}(c). 

Suppose that $(R(\unit),R_2(\unit,\unit),R_0)$ is symmetric Frobenius algebra in $\C$ with comultiplication $\Delta$ and counit $\nu$.
Then, by Proposition~\ref{prop:R1FrobinZC}, $((R(\unit),\sigma),R_2(\unit,\unit),R_0,\Delta,\nu)$ is a Frobenius algebra in $\Z(\C)$. Further, as the pivotal structure of $\Z(\C)$ is the same as that of $\C$, $R(\unit)$ being a symmetric Frobenius algebra in $\C$ implies that it is a symmetric Frobenius in $\Z(\C)$.
Hence, by Proposition~\ref{prop:piv}(b), $R(\unit)\otimes -$ is a pivotal functor. Since $U\dashv R$ is a coHopf adjunction by assumption, we get that $h^l_{\unit,X}: R(\unit)\otimes X\rightarrow R(\unit\otimes U(X)) = RU(X)$ is a monoidal natural isomorphism between the functors $R(\unit)\otimes - $ and $RU$. Therefore, we get that $RU$ is pivotal. Now, observe that
\[
\begin{array}{rl}
    \xi^R_{X} \circ R(\fp^{\D}_X) \circ R(\varepsilon^r_X) & \stackrel{(N)}{=} \xi^R_{X} \circ R(\lv\lv \varepsilon^r_X) \circ R(\fp^{\D}_{UR(X)}) \\
    & \stackrel{(N)}{=} \lv\lv R(\varepsilon^r_X) \circ \xi^R_{UR(X)} \circ R(\fp^{\D}_{UR(X)}) \\
    & \stackrel{(\textnormal{\ref{eq:pivotalFunctor}})}{=} \lv\lv R(\varepsilon^r_X) \circ \xi^R_{UR(X)} \circ R(\xi^U_{R(X)}) \circ RU(\fp^{\C}_{R(X)}) \\
    & \stackrel{(\textnormal{\ref{lem:pivShimizuA}})}{=} \lv\lv R(\varepsilon^r_X) \circ \xi^{RU}_{R(X)} \circ RU(\fp^{\C}_{R(X)}) \\
    & \stackrel{(\textnormal{\ref{eq:pivotalFunctor}})}{=} \lv\lv R(\varepsilon^r_X) \circ \fp^{\C}_{RUR(X)} \\
    & \stackrel{(N)}{=} \fp^{\C}_{R(X)} \circ  R(\varepsilon^r_X).
\end{array}
\]
As $\varepsilon^r_X$ is epic and $R$ is exact, $R(\varepsilon^r_X)$ is epic. Consequently, $\xi^R_{X} \circ R(\fp^{\D}_X) = \fp^{\C}_{R(X)} $, thereby proving that $R$ is pivotal.
\end{proof}


\subsection{Ribbon case}\label{subsec:3.ribbon}
In this section, we equip our categories with braidings and strengthen the results obtained in previous sections. 

\begin{lemma}\label{lem:adjointBraided}
Let $U:(\C,c)\rightarrow(\D,d)$ be a braided strong monoidal functor between braided monoidal categories. Then, $U^{\ra}$ is braided and $U^{\la}$ is cobraided.
\end{lemma}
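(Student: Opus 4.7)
The plan is to prove the two statements separately and deduce one from the other by opposite-category duality. For the first claim, since $U:\C\to\D$ is a strong monoidal left adjoint, Theorem~\ref{thm:docadj}(a) (applied after noting that the strong $U_2$ is invertible, so $U$ carries a canonical comonoidal structure $U^2=U_2^{-1}$, $U^0=U_0^{-1}$) hands me the monoidal structure on $R=U^{\ra}$ explicitly:
\begin{equation*}
R_2(X,Y) \;=\; R(\varepsilon_X\otimes\varepsilon_Y)\,\circ\,R\bigl(U^2(R(X),R(Y))\bigr)\,\circ\,\eta_{R(X)\otimes R(Y)}.
\end{equation*}
By Lemma~\ref{lem:coBraid}, the braided strong monoidal $U$ is cobraided, i.e.\ its comonoidal constraint satisfies $U^2(Y,X)\circ U(c_{X,Y}) = d_{U(X),U(Y)}\circ U^2(X,Y)$.

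To verify that $R$ is braided, I would compute $R_2(Y,X)\circ c_{R(X),R(Y)}$ by substituting the formula above and then performing, in order: (i) naturality of the unit $\eta$, to rewrite $\eta_{R(Y)\otimes R(X)}\circ c_{R(X),R(Y)}$ as $RU(c_{R(X),R(Y)})\circ\eta_{R(X)\otimes R(Y)}$; (ii) the cobraided identity for $U$ inside the application of $R$, turning $U^2(R(Y),R(X))\circ U(c_{R(X),R(Y)})$ into $d_{UR(X),UR(Y)}\circ U^2(R(X),R(Y))$; (iii) naturality of the braiding $d$ applied to $\varepsilon_X\otimes\varepsilon_Y$, so that $(\varepsilon_Y\otimes\varepsilon_X)\circ d_{UR(X),UR(Y)}=d_{X,Y}\circ(\varepsilon_X\otimes\varepsilon_Y)$. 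Pulling $R(d_{X,Y})$ out on the left then leaves exactly $R(d_{X,Y})\circ R_2(X,Y)$, which is the braided-functor condition. I expect no real obstacle here; the only thing that requires care is getting the direction of the cobraided identity correct, which is the reason for first invoking Lemma~\ref{lem:coBraid}.

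For the second claim, that $U^{\la}$ is cobraided, I would pass to opposite categories: $U^{\op}:\C^{\op}\to\D^{\op}$ is a strong monoidal functor between braided categories (with the braidings $c$ and $d$ transported to $\C^{\op},\D^{\op}$), and it remains braided because the braided-functor axiom is self-dual under reversing arrows. Since $U^{\la}\dashv U$ translates to $U^{\op}\dashv(U^{\la})^{\op}$, the first part of the lemma applied to $U^{\op}$ says $(U^{\op})^{\ra}=(U^{\la})^{\op}$ is a braided monoidal functor. Unpacking the definition, this is precisely the statement that $U^{\la}$ is cobraided. This opposite-category trick avoids repeating the computation, and completes the proof.
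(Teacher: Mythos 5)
Your proposal is correct and follows essentially the same route as the paper: both verify the braided condition for $R=U^{\ra}$ by substituting the explicit $R_2$ from Theorem~\ref{thm:docadj}(a) and chaining naturality of $\eta$, the (co)braided identity for the strong monoidal $U$ via Lemma~\ref{lem:coBraid}, and naturality of the braiding $d$ against the counit, then deduce the statement for $U^{\la}$ by passing to $U^{\op}\dashv (U^{\la})^{\op}$. The only cosmetic difference is that you run the computation starting from $R_2(Y,X)\circ c_{R(X),R(Y)}$ while the paper starts from $R(d_{X,Y})\circ R_2(X,Y)$.
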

\begin{proof}
Let $R:=U^{\ra}$. Observe that $R(d_{X,Y})\circ R_2(X,Y)$ is
\[ 
\begin{array}[]{cl}
    \stackrel{(\textnormal{\ref{thm:docadj}}(a))}{=} & 
    R(d_{X,Y})\circ R(\varepsilon_X\cdot \varepsilon_Y) \circ R(U_2^{-1}(R(X),R(Y))) \circ \eta_{R(X)\cdot R(Y)} \\
    \stackrel{(N)}{=} &
    R(\varepsilon_Y \cdot \varepsilon_X) \circ R(d_{UR(X),UR(Y)})  \circ R(U_2^{-1}(R(X),R(Y))) \circ \eta_{R(X)\cdot R(Y)} \\
    \stackrel{(\diamondsuit)}{=} &
    R(\varepsilon_Y \cdot \varepsilon_X) \circ R(U_2^{-1}(R(Y),R(X))) \circ RU(c_{R(X),R(Y)}) \circ \eta_{R(X)\cdot R(Y)}  \\ 
    \stackrel{(N)}{=} &
    R(\varepsilon_Y \cdot \varepsilon_X) \circ R(U_2^{-1}(R(Y),R(X))) \circ \eta_{R(Y)\cdot R(X)} \circ c_{R(X),R(Y)} \\
    \stackrel{(\textnormal{\ref{thm:docadj}}(a))}{=} & 
    R_2(Y,X) \circ c_{R(X),R(Y)} .
\end{array}
\]
Here, the equality $(\diamondsuit)$ holds because $U$ is braided. Thus, $R$ is braided. 

By Lemma~\ref{lem:coBraid}, we know that $U$ is cobraided. Hence, $U^{\op}$ is a braided strong monoidal functor.
Then, the claim about $L:=U^{\la}$ follows by applying the above result to the adjunction $U^{\op}\dashv L^{\op}$.
\end{proof}

\begin{theorem}\label{thm:ribboncoHopf}
Let $U:\C\rightarrow\D$ be a ribbon functor between ribbon categories satisfying Condition~\ref{cond:main}. Then $R(\unit)$ is a symmetric Frobenius algebra in $\C$ if and only if $R$ is a ribbon Frobenius functor.
\end{theorem}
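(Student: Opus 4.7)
My plan is to bootstrap from Theorem~\ref{thm:pivotalcoHopf} by leveraging a two-sided adjunction afforded by the Frobenius hypothesis on $R(\unit)$, and then to invoke Proposition~\ref{prop:brPivotalFunctor} to upgrade pivotality to ribbonness.

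The backward direction is immediate: a ribbon Frobenius functor is, in particular, pivotal, so Lemma~\ref{lem:mon-preserve}(c) applied to the (trivially symmetric Frobenius) unit object $\unit_{\D}\in\D$ gives that $R(\unit_{\D})$ is a symmetric Frobenius algebra in $\C$.

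For the forward direction, assume that $R(\unit)$ is a symmetric Frobenius algebra. First, I would invoke Theorem~\ref{thm:pivotalcoHopf}, which applies since the (braided, ribbon) hypothesis on $U$ includes pivotality, to conclude that $R$ is a pivotal Frobenius monoidal functor. The crux of the argument is then the observation that, because $R(\unit)$ is Frobenius, Theorem~\ref{thm:frobcoHopf}(b) furnishes a second adjunction $R\dashv U$ on top of the given $U\dashv R$; thus $R$ serves simultaneously as the right adjoint $U^{\ra}$ and the left adjoint $U^{\la}$ of $U$. Applying Lemma~\ref{lem:adjointBraided} to the braided strong monoidal functor $U$ now yields that $R$ is both braided (as $U^{\ra}$) and cobraided (as $U^{\la}$). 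Finally, Proposition~\ref{prop:brPivotalFunctor} lets us conclude that $R$, being a braided Frobenius functor between ribbon categories that is both pivotal and cobraided, is a ribbon functor.

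The main obstacle — or more honestly, the key insight — is recognizing the two-sided adjunction $U\dashv R\dashv U$ afforded by Theorem~\ref{thm:frobcoHopf}(b); this trick is what lets us obtain cobraidedness of $R$ essentially for free, circumventing what would otherwise be a technical direct computation with the explicit Frobenius comonoidal constraint $R^2(X,Y) = (h^l_{X,R(Y)})^{-1}\circ R(\id_X\otimes\eta^l_Y)$ and the coHopf operators of~(\ref{eq:cohopfadj}).
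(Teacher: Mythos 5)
Your proposal is correct and follows essentially the same route as the paper: the paper likewise notes that Theorem~\ref{thm:frobcoHopf} gives $U^{\la}=U^{\ra}=R$, applies Lemma~\ref{lem:adjointBraided} to obtain that $R$ is both braided and cobraided, gets pivotality from Theorem~\ref{thm:pivotalcoHopf}, and concludes via Proposition~\ref{prop:brPivotalFunctor}. Your fleshed-out converse via Lemma~\ref{lem:mon-preserve}(c) is a fine substitute for the paper's ``straightforward.''
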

\begin{proof}
By Theorem~\ref{thm:frobcoHopf}, $U^{\la}=U^{\ra}=R$. Since $U$ is braided, by Lemma~\ref{lem:adjointBraided}, we get that $U^{\la}=R$ is cobraided and $U^{\ra}=R$ is braided. Finally, as $R(\unit)$ is symmetric Frobenius, we obtain that $R$ is pivotal and Frobenius by Theorem~\ref{thm:pivotalcoHopf}. Now, by Proposition~\ref{prop:brPivotalFunctor}, $R$ is a ribbon Frobenius monoidal functor. The converse is straightforward.
\end{proof}


\section{Application to tensor categories}\label{sec:application}
Now, we apply the results of Section~\ref{sec:coHopf} to a functor $\Psi$ (studied in \cite{shimizu2020further}) from the Drinfeld center $\Z(\C)$ to certain category of endofunctors of a $\C$-module category $\M$, namely $\Rex_{\C}(\M)$. We recall background material in Section~\ref{subsec:4.background}, discuss the functor $\Psi$ in Section~\ref{subsec:4.main}, and provide a proof of our main application, Theorem~\ref{thm:intro2}, in Section~\ref{subsec:4.proof}.

\subsection{Background}\label{subsec:4.background}
We refer the reader to \cite{etingof2016tensor} for the following material.
Let $\C$ be a rigid monoidal category. If further, $\C$ is $\kk$-linear and finite abelian, the unit object is simple, and the tensor product $\otimes$ is $\kk$-bilinear, we call it a \textit{finite tensor category}. 

\subsubsection*{Module categories}
A \textit{(finite) left $\C$-module category} is a (finite abelian) category $\M$ equipped with a bifunctor $\tr:\C \times \M \rightarrow \M$ that is $\kk$-linear and exact in first variable, and natural isomorphisms
\begin{equation*}
    (X\otimes Y)\tr M \cong X\tr (Y\tr M), \hspace{0.75cm} \unit \tr M \cong M \hspace{1cm} (X,Y\in\C,\; M\in \M)
\end{equation*} 
satisfying certain coherence conditions. By a variant of Mac Lane's coherence theorem, we can (and will) assume that the above isomorphisms are identity maps. We call a finite left $\C$-module category \textit{exact} if for any $M\in\M$ and any projective object $P\in\C$, $P\tr M\in \M$ is projective. If $\C$ is a (pivotal) finite tensor category, then $\Z(\C)$ is a braided (pivotal) finite tensor category and $U_{\C}$ is a $\kk$-linear, exact (and pivotal) functor.

A left $\C$-module functor is a pair $(F,s)$ where $F:(\M,\tr_{\M})\rightarrow (\N,\tr_{\N})$ is a functor between module categories and $s=\{s_{X,M}: F(X\tr_{\M}M) \xrightarrow{\sim} X\tr_{\N}F(M) \}$ is a natural isomorphism satisfying certain conditions. 
Given a left $\C$-module category $\M$, we will use the notation $\Rex_{\C}(\M)$ to denote the category with objects as right exact, left $\C$-module endofunctors of $\M$ and morphisms as $\C$-module natural transformations. 
A \textit{relative Serre functor} \cite{schaumann2015pivotal,fuchs2020eilenberg} is a functor $\dS:\M\rightarrow \M $ equipped with a certain natural isomorphism.


\subsection{The functor $\Psi$}\label{subsec:4.main}
Consider the following terminology.

\begin{definition}\cite[Section~3.6]{shimizu2020further}\label{defn:Psi}
    Let $\C$ be a finite tensor category and $\M$ a left $\C$-module category. Consider the functor below
    \begin{equation}\label{eq:4Psi}
    \Psi:\Z(\C) \rightarrow \Rex_{\C}(\M), \hspace{1cm} (X,\sigma) \mapsto  (X\tr -,s^{\sigma}),
    \end{equation} 
    where the left $\C$-module structure of $X\tr -$ is 
    \[s^{\sigma}_{Y,M}: Y\tr (X\tr(M)) = (Y\otimes X)\tr M \xrightarrow{\sigma_Y\tr \id_M} (X\otimes Y)\tr M = X\tr(Y\tr M).\]
\end{definition}

In order to better understand $\Psi$, we define the following functors.

\begin{definition}\label{defn:PsiStructureFunctors}
Set $\D:=\Rex_{\C}(\M)$.
\begin{itemize}
    \item For any monoidal category $\C$, consider the forgetful functor
    \begin{equation}\label{eq:4forgetful}
        U'_{\C}:\Z(\C)^{\mir}\rightarrow \C \hspace{1cm} (X,\sigma)\mapsto X.
    \end{equation}
    As a monoidal functor, $U_{\C}$ and $U'_{\C}$ are identical. Thus, many facts about $U_{\C}$ also hold true for $U'_{\C}$. In particular, $U'_{\C}$ is a strong monoidal functor. When $\C$ is a finite tensor category, $U'_{\C}$ admits a right adjoint $R_{\C}$.

    \item Schauenburg's \cite[Theorem~3.3]{schauenburg2001monoidal} established the following braided equivalence between the Drinfeld centers of $\C$ and $\D^{\rev}$. 
    \begin{equation}\label{eq:4Schauenburg}
        \Theta_{\M}:\Z(\C)\xrightarrow{\sim} \Z(\D^{\rev}), \;\; (X,\sigma)\mapsto ((X\tr -,s^{\sigma}),\Sigma ).
    \end{equation}
    For the definition of $\Sigma$, see \cite[Section~3.7]{shimizu2020further}.
    
    \item For any monoidal category $\C$, by \cite[Exercise~8.5.2]{etingof2016tensor}, we have the following braided equivalence 
    \begin{equation}\label{eq:4mirror}
        \Omega_{\C}: \Z(\C^{\rev})\cong \Z(\C)^{\mir}, \hspace{1cm} (X,\sigma)\mapsto (X,\sigma^{-1})  .
    \end{equation}
\end{itemize}
\end{definition}

Now consider the following result.
\begin{lemma}\cite[Theorem~3.14]{shimizu2020further}\label{lem:PsiStructure}
The functor $\Psi$ is equal to the composition $U_{\D}'\circ \Omega_{\D} \circ \Theta_{\M}$. Further, $\Psi$ is an exact, strong monoidal functor.
\end{lemma}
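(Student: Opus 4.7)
The plan is to verify the factorization $\Psi = U'_{\D} \circ \Omega_{\D} \circ \Theta_{\M}$ by tracking an object through the three factors and comparing with the defining formula for $\Psi$, then to deduce exactness and strong monoidality formally from the corresponding properties of each factor, all of which are already recorded in Definition~\ref{defn:PsiStructureFunctors}.

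For the identification, given $(X,\sigma) \in \Z(\C)$, by (\ref{eq:4Schauenburg}) one has $\Theta_{\M}(X,\sigma) = ((X\tr -, s^{\sigma}), \Sigma) \in \Z(\D^{\rev})$, where $\D = \Rex_{\C}(\M)$. Applying $\Omega_{\D}$ from (\ref{eq:4mirror}) inverts the half-braiding and yields $((X\tr -, s^{\sigma}), \Sigma^{-1}) \in \Z(\D)^{\mir}$. The forgetful $U'_{\D}$ from (\ref{eq:4forgetful}) then discards the half-braiding and returns $(X\tr -, s^{\sigma})$, which matches $\Psi(X,\sigma)$ from Definition~\ref{defn:Psi}. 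Each of the three factors acts as the identity on underlying arrows, so the match on morphisms is automatic. The only step requiring genuine care, and the main obstacle, is the half-braiding bookkeeping: one must confirm that the half-braiding $\Sigma$ defined by Schauenburg for the target $\Z(\D^{\rev})$ (with reversed composition) is precisely cancelled by the inversion introduced in $\Omega_{\D}$, so that the data discarded by $U'_{\D}$ matches cleanly. This is a routine unpacking of the formula for $\Sigma$ given in \cite{shimizu2020further}.

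The structural properties follow formally once the identification is established. The functors $\Theta_{\M}$ and $\Omega_{\D}$ are braided monoidal equivalences, hence strong monoidal and exact. The forgetful functor $U'_{\D}$ is strong monoidal by construction (as it is the same as $U_{\D}$ on the monoidal level), and it is exact because, under the module-category assumptions in force, $\D = \Rex_{\C}(\M)$ is a finite tensor category so the remarks of Definition~\ref{defn:PsiStructureFunctors} apply. Since composites of strong monoidal (resp.\ exact) functors are strong monoidal (resp.\ exact), $\Psi$ inherits both properties. As the statement is already recorded as \cite[Theorem~3.14]{shimizu2020further}, the proof in the paper may reasonably be replaced by a citation rather than a full redo.
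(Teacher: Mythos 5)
Your proposal is correct and, for the factorization and strong monoidality, follows the paper's proof essentially verbatim: track $(X,\sigma)$ through $\Theta_{\M}$, $\Omega_{\D}$, $U'_{\D}$ using (\ref{eq:4Schauenburg}), (\ref{eq:4mirror}), (\ref{eq:4forgetful}), observe that the half-braiding $\Sigma$ is inverted and then discarded, and conclude strong monoidality from that of the three factors. The one place you diverge is exactness. You derive it by composing exact functors: the two equivalences are exact, and the forgetful functor out of the Drinfeld center of a finite tensor category is exact (a fact recorded in the paper's background on Drinfeld centers, not in Definition~\ref{defn:PsiStructureFunctors} as you cite; and it requires knowing that $\Rex_{\C}(\M)$ is itself a finite tensor category, which holds for $\M$ exact and indecomposable). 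The paper instead argues that $\Psi$ admits a right adjoint by \cite[Theorem~3.11]{shimizu2020further}, admits a left adjoint because the source and target are rigid, and that a functor between finite abelian categories with both adjoints is exact. Both routes are valid and short; yours leans on exactness of the forgetful functor from the center, while the paper's leans on the adjoint-existence criterion and avoids having to know anything about $U'_{\D}$ beyond its monoidal structure. Neither is a gap; just be aware that your route quietly uses the finiteness/tensor structure of $\Rex_{\C}(\M)$ at the step where you invoke exactness of $U'_{\D}$.
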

\begin{proof}
Observe that 
{\small
\[
\begin{array}{rclcl}
    U'_{\D}\circ \Omega_{\D} \circ \Theta_{\M}(X,\sigma) 
    & \stackrel{(\textnormal{\ref{eq:4Schauenburg}})}{=} & U'_{\D}\circ \Omega_{\D} ((X\tr - ,s^{\sigma}),\Sigma) 
    & \stackrel{(\textnormal{\ref{eq:4mirror}})}{=} & U'_{\D} ((X\tr - ,s^{\sigma}),\Sigma^{-1}) \\
    & \stackrel{(\textnormal{\ref{eq:4forgetful}})}{=} & (X\tr -, s^{\sigma})
    & \stackrel{(\textnormal{\ref{eq:4Psi}})}{=} & \Psi(X,\sigma) .
\end{array}
\]
}
Since $U_{\D}'$, $\Omega_{\D}$ and $\Theta_{\M}$ are each strong monoidal, $\Psi$ is a strong monoidal functor. Furthermore, by \cite[Theorem~3.11]{shimizu2020further}, $\Psi$ admits a right adjoint $\Psi^{\ra}$. Since the categories $\Z(\C)$ and $\Rex_{\C}(\M)$ are rigid monoidal, the existence of a right adjoint implies the existence of a left adjoint, see \cite[\S 2.3]{bruguieres2011exact}. Finally, as $\Z(\C)$ and $\Rex_{\C}(\M)$ are finite categories, the existence of adjoints implies that $\Psi$ is exact. 
\end{proof}

Now, let $\C$ be a pivotal tensor category with pivotal structure $\mathfrak{p}:\id_{\C}\xrightarrow{\cong} \lv\lv(-)$, and let $\M$ be an exact left $\C$-module category. Then, by \cite[Lemma~3.3]{shimizu2019relative}, a (right) relative Serre functor $\dS$ of $\M$ exists. Further, $\dS$ is a left $\C$-module functor. 
In this case, a \textit{pivotal structure} on $\M$ is a left $\C$-module natural isomorphism $\widetilde{\fp} :\id_{\M}\rightarrow \dS$, and a \textit{pivotal left $\C$-module category} is an exact left $\C$-module category equipped with a pivotal structure. 

For $\M$ a pivotal left $\C$-module category, the category $\D:=\Rex_{\C}(\M)$ is a pivotal monoidal category \cite[Theorem~3.13]{shimizu2019relative}. In fact, we get the result below.

\begin{lemma}\label{lem:psiPivotal}
If $\C$ is a pivotal finite tensor category and $\M$ is a pivotal left $\C$-module category, then $\Psi$ is a pivotal functor.
\end{lemma}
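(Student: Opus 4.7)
The plan is to invoke the factorization $\Psi = U'_{\D} \circ \Omega_{\D} \circ \Theta_{\M}$ established in Lemma~\ref{lem:PsiStructure}, together with Lemma~\ref{lem:pivShimizuB} on the closure of pivotal Frobenius monoidal functors under composition. Each of the three factors is strong monoidal (hence Frobenius monoidal in the trivial sense, with duality transformations $\zeta$ and $\xi$ given by the canonical isomorphisms between the two natural choices of duals). So it is enough to show that each factor satisfies the pivotal compatibility of Definition~\ref{defn:pivotalFunctor}.

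For $U'_{\D}$, the category $\D = \Rex_{\C}(\M)$ is pivotal by \cite[Theorem~3.13]{shimizu2019relative}, and the pivotal structure on $\Z(\D)$ is componentwise, i.e.\ $\fp^{\Z(\D)}_{(F,\Sigma)} = \fp^{\D}_F$. Under the forgetful functor the duality transformation $\xi^{U'_{\D}}$ is the identity, so the relation $\fp^{\D}_{U'_{\D}(F,\Sigma)} = \xi^{U'_{\D}}_{(F,\Sigma)}\circ U'_{\D}(\fp^{\Z(\D)}_{(F,\Sigma)})$ is automatic. Passing to the mirror braiding does not change underlying objects or dualities, so $U'_{\D}:\Z(\D)^{\mir}\to \D$ remains pivotal. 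The functor $\Omega_{\D}$ is likewise straightforward: it is the identity on underlying objects and on morphisms, inverting only the half-braiding, so the pivotal structures on $\Z(\D^{\rev})$ (inherited from $\D^{\rev}$) and on $\Z(\D)^{\mir}$ (inherited from $\D$) agree componentwise, and $\Omega_{\D}$ is pivotal.

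The main obstacle is $\Theta_{\M}$. Here the source $\Z(\C)$ carries the componentwise pivotal structure $\fp^{\Z(\C)}_{(X,\sigma)} = \fp^{\C}_X$ coming from $\C$, while the target $\Z(\D^{\rev})$ inherits its pivotal structure from the pivotal structure on $\D^{\rev}$, which in turn is built from the relative Serre functor $\dS$ and the pivotal module structure $\widetilde{\fp}:\id_{\M}\to \dS$. To verify Definition~\ref{defn:pivotalFunctor} for $\Theta_{\M}$, I would use the explicit description of the half-braiding $\Sigma$ on $(X\tr -,s^{\sigma})$ from \cite[Section~3.7]{shimizu2020further} together with the explicit duals in $\Rex_{\C}(\M)$ (which on $X\tr -$ are built from $\lv X$ and $\dS$). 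Tracing the definition of $\xi^{\Theta_{\M}}$ through these formulas, the compatibility reduces to the statement that the pivotal module natural transformation $\widetilde{\fp}$ is itself left $\C$-linear and matches $\fp^{\C}$ on the $\C$-action, namely $(\fp^{\C}_X \tr \widetilde{\fp}_M) = \widetilde{\fp}_{X\tr M}$ under the appropriate coherence.

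Alternatively, one may bypass the explicit computation by the following abstract argument, which I expect to be cleaner. Both source and target of $\Theta_{\M}$ are braided pivotal finite tensor categories whose pivotal structures are \emph{canonically} determined by the ambient pivotal data $(\C,\M)$ (through $\C$ on one side and through $(\D,\dS,\widetilde{\fp})$ on the other). Since $\Theta_{\M}$ is a braided equivalence and the transported pivotal structure on $\Z(\D^{\rev})$ from $\Z(\C)$ is uniquely determined by the monoidal structure and the choice of duals, it must coincide with the intrinsic one, giving the required identity $\xi^{\Theta_{\M}}_{(X,\sigma)}\circ \Theta_{\M}(\fp^{\Z(\C)}_{(X,\sigma)}) = \fp^{\Z(\D^{\rev})}_{\Theta_{\M}(X,\sigma)}$. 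Combining the three steps with Lemma~\ref{lem:pivShimizuB} then yields that $\Psi$ is a pivotal functor.
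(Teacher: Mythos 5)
Your overall strategy coincides with the paper's: factor $\Psi = U'_{\D}\circ\Omega_{\D}\circ\Theta_{\M}$ via Lemma~\ref{lem:PsiStructure}, check that each factor is pivotal, and conclude by Lemma~\ref{lem:pivShimizuB}. Your treatments of $U'_{\D}$ (componentwise pivotal structure on the center, identity duality transformation) and of $\Omega_{\D}$ agree with what the paper asserts, the latter being dismissed there as ``straightforward to check.''

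The gap is in the step you yourself flag as the main obstacle: the pivotality of $\Theta_{\M}$. Your first route is only a sketch --- you reduce the claim to a compatibility of the form $\fp^{\C}_X\tr\widetilde{\fp}_M=\widetilde{\fp}_{X\tr M}$, but you neither derive this from the definition of a pivotal module category (where $\widetilde{\fp}$ is merely required to be a $\C$-module natural isomorphism $\id_{\M}\to\dS$, with the double duals hidden in the module functor structure of $\dS$) nor trace $\xi^{\Theta_{\M}}$ through the explicit duals in $\Rex_{\C}(\M)$. Your second, ``cleaner'' abstract argument is actually invalid: pivotal structures on a finite tensor category are not unique --- they form a torsor over the group of monoidal natural automorphisms of the identity functor, which can be nontrivial --- so a braided monoidal equivalence does not automatically intertwine two independently prescribed pivotal structures. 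The assertion that the transported pivotal structure ``must coincide with the intrinsic one'' is precisely what has to be proved, not a consequence of uniqueness. The paper closes this step by citing \cite[Proposition~5.14]{spherical2022}, which is exactly the statement that $\Theta_{\M}$ is a pivotal functor; without that citation or a completed version of your first computation, your proof is incomplete.
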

\begin{proof}
By Lemma~\ref{lem:PsiStructure}, $\Psi = U_{\D}' \circ \Omega_{\D} \circ \Theta_{\M}$. By \cite[Proposition~5.14]{spherical2022}, $\Theta_{\M}$ is a pivotal functor. It is straightforward to check that $\Omega_{\D}$ is a pivotal functor.  Also, for any pivotal monoidal category $\D$, the forgetful functor $\Z(\D)\rightarrow \D$ is pivotal (see e.g. \cite[Section~5.2.2]{turaev2017monoidal}). Thus, $U'_{\D}$ is pivotal. By Lemma~\ref{lem:pivShimizuB}, the composition of pivotal functors is pivotal. Hence, we conclude that $\Psi$ is pivotal.
\end{proof}

Finally, we prove our main result of this section, and last result of this work.


\subsection{Proof of Theorem~\ref{thm:intro2}}\label{subsec:4.proof}
We know that $\Psi$ is a strong monoidal functor between abelian monoidal categories. Since it is an exact functor (by Lemma~\ref{lem:PsiStructure}) between finite abelian categories, it admits a right adjoint $\Psi^{\ra}$. By Theorem~\ref{thm:docadj}(a), $\Psi\dashv \Psi^{\ra}$ is a comonoidal adjunction. We will show that the adjunction $\Psi\dashv\Psi^{\ra}$ satisfies Condition~\ref{cond:main}.

\noindent
\upshape{(a)}: As $\Z(\C)$, $\Rex_{\C}(\M)$ are rigid, by Remark~\ref{rem:coHopf}, $\Psi\dashv \Psi^{\ra}$ is a coHopf adjunction.

\noindent
\upshape{(b)}: Using Lemma~\ref{lem:PsiStructure}, $\Psi^{\ra} = \Theta_{\M}^{\ra}\circ \Omega_{\D}^{\ra}\circ (U'_{\D})^{\ra}$. As $\Theta_{\M}, \, \Omega_{\D}$ are category equivalences, their right adjoint are also category equivalences, and in particular, exact. Lastly, using \cite[Proposition~3.39(ii)]{etingof2004finite}, $(U'_{\D})^{\ra}$ is exact. Thus, $\Psi^{\ra}$ is exact.
 
\noindent
\upshape{(c)}: Since $\M$ is indecomposable, $\D=\Rex_{\C}(\M)$ satisfies $\End_{\D}(\id_{\M})\cong \kk$. Thus, by \cite[Corollary~5.9]{shimizu2016unimodular}, we get that the functor $(U'_{\D})^{\ra}$ is faithful. Since $\Theta_{\M}^{\ra}$ and $\Omega_{\D}$ are category equivalences, we get that $\Psi^{\ra} = \Theta_{\M}^{\ra}\circ \Omega_{\D}^{\ra}\circ U_{\D}^{'\ra}$ is faithful.

Now, part \upshape{(i)} follows from Theorem~\ref{thm:frobcoHopf} and Theorem~\ref{thm:speFrobcoHopf}. Further, when $\C$ is pivotal and $\M$ is a pivotal left $\C$-module category, by Lemma~\ref{lem:psiPivotal}, $\Psi$ is pivotal. Hence, part \upshape{(ii)} follows from Theorem~\ref{thm:pivotalcoHopf}.      \qed

\bibliographystyle{alpha}
\bibliography{references}

\end{document}